\documentclass[reqno,12pt,letterpaper,proof]{amsart}
\NeedsTeXFormat{LaTeX2e}[1994/06/01]
\usepackage{amsmath,amsthm,amssymb}
\usepackage{mathtools}
\usepackage{geometry}
\usepackage{enumitem}

\RequirePackage{amsthm,graphicx,mathrsfs,url, bbm}
\RequirePackage[usenames,dvipsnames]{color}
\RequirePackage[colorlinks=true,linkcolor=ForestGreen,citecolor=NavyBlue]{hyperref}
\RequirePackage{amsxtra}
\usepackage{comment}
\usepackage{cases}

\newtheorem{theorem}{Theorem}
\newtheorem{proposition}{Proposition}[section]
\newtheorem{lemma}[proposition]{Lemma}

\theoremstyle{definition}
\newtheorem{definition}[proposition]{Definition}

\def\Remark{\noindent\textbf{Remark.}\ }
\def\Remarks{\noindent\textbf{Remarks.}\ }

\newcommand{\C}{\mathbb{C}} 
\newcommand{\R}{\mathbb{R}} 
\newcommand{\N}{\mathbb{N}} 
\renewcommand{\H}{\mathbb{H}}
\newcommand{\W}{\mathbb{W}}
\renewcommand{\d}{\mathrm{d}}
\newcommand{\ztr}{\textup{0-tr}}

\DeclareOption{proof}{%
  \setlength{\marginparwidth}{0.6in}%
  \def\?[#1]{{\color{blue}\textbf{[#1]}}\marginpar{\Large{\textbf{??}}}}%
}
\ProcessOptions\relax

\setlength{\textheight}{8.50in} \setlength{\oddsidemargin}{0.00in}
\setlength{\evensidemargin}{0.00in} \setlength{\textwidth}{6.10in}
\setlength{\topmargin}{0.00in} \setlength{\headheight}{0.18in}
\setlength{\marginparwidth}{1.0in}
\setlength{\abovedisplayskip}{0.2in}
\setlength{\belowdisplayskip}{0.2in}
\setlength{\parskip}{0.05in}

\setcounter{tocdepth}{2}
\let\oldtocsection=\tocsection
\let\oldtocsubsection=\tocsubsection
\renewcommand{\tocsection}[2]{\hspace{0em}\oldtocsection{#1}{#2}}
\renewcommand{\tocsubsection}[2]{\hspace{1em}\oldtocsubsection{#1}{#2}}


\numberwithin{equation}{section}

\DeclareGraphicsRule{*}{mps}{*}{}

\title{Brownian Loops and the Selberg Zeta Function}
\author{Roman Lemonde}
\address{Eidgenössische Technische Hochschule Zürich, ETHZ}
\email[Roman Lemonde]{rlemonde@ethz.ch}
\author{Jian Wang}
\address{Institut des Hautes {\'E}tudes Scientifiques, 91893 Bures-sur-Yvette, France}
\email[Jian Wang]{wangjian@ihes.fr}

\begin{document}

\begin{abstract}

We study the Brownian loop measure on hyperbolic surfaces for Brownian motion with a constant killing rate. 
We compute the mass of Brownian loops with killing in a free homotopy class and then relate the total mass of loops in all essential homotopy classes to the Selberg zeta function when the surface is geometrically finite. As an application, we provide a probabilistic interpretation of different notions of regularized determinants of Laplacian, in both the compact and infinite-area cases.
\end{abstract}

\maketitle

\thispagestyle{empty}

\section{Introduction}
Let $\Gamma\subset \mathrm{PSL}_2(\R)$ be a torsion-free Fuchsian group and $X\coloneqq \Gamma\backslash \H$ be a complete, geometrically-finite hyperbolic surface without boundary. Let $\mathcal P_X$ be the set of {\em oriented} primitive closed geodesics. The Selberg zeta function was first introduced by Atle Selberg as a tool to study the Selberg trace formula (see \cite{fischer}). We define it as
\[\label{def selberg zeta function} Z_X(s)\coloneqq \prod_{\gamma\in \mathcal P_X}\prod_{k=0}^\infty \left( 1-e^{-(s+k)\ell_\gamma} \right)\]
which is convergent when \(\mathrm{Re}(s)> \delta\) (see Lemma \ref{convergence zeta function}). Here, $\ell_\gamma$ is the hyperbolic length of $\gamma\in \mathcal P_X$, and $\delta$ is the exponent of convergence of $\Gamma$, defined as the infimum of $\delta'>0$ such that the Poincar\'e series $\sum_{h\in \Gamma} e^{-\delta' d(z,h\cdot z)}$ is finite for all $z\in \H$.

The Brownian loop measure was first introduced on the plane $\C$ in \cite{lawler} and \cite{lawler_conf_rest}. Its definition can be generalized to any orientable Riemannian surface $(X,g)$, and to Brownian motion with a constant killing rate $\kappa$. We denote it $\mu_X^\kappa$. It is defined on the space of oriented loops on $X$ (see \S \ref{def_loop_measure} for a brief introduction). The main 
result of this paper 
relates the total mass of the Brownian loop measure 
in 
all essential homotopy classes to the Selberg zeta function:
\begin{theorem}\label{thm:zeta} 
    Let $X$ be as above. Then for any $\kappa\geq-\frac14$ such that $\frac12+\sqrt{\frac14+\kappa}>\delta$,
    \[ \sum_{\gamma\in \mathcal P_X}\sum_{m=1}^\infty \mu^\kappa_X\left( \mathcal C_X(\gamma^m) \right)=-\log Z_X\left( \frac12+\sqrt{\frac14+\kappa} \right), \]
where for each $\gamma\in \mathcal P_X$, $m\in \mathbb N^*$, $\mathcal C_X(\gamma^m)$ denotes the set of all oriented closed curves on $X$ that are freely homotopic to $\gamma^m$.
\end{theorem}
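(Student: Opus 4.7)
The plan is to compute the Brownian loop mass in each free homotopy class $[\gamma^m]$ separately and then sum over primitive classes and their powers, using the heat kernel on the universal cover.

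First, I would apply the standard heat-kernel representation of the Brownian loop measure together with the unfolding trick over $\Gamma$. A loop on $X$ in the conjugacy class of $\gamma^m$ lifts to a Brownian bridge on $\H$ from $z$ to $g \cdot z$ for some $g$ conjugate to $\gamma^m$; unfolding the sum over the conjugacy class and folding by the centralizer of $\gamma^m$ (which, since $\Gamma$ is torsion-free, is the cyclic group $\langle\gamma\rangle$ generated by the primitive element) yields
\[\mu_X^\kappa\bigl(\mathcal C_X(\gamma^m)\bigr) = \int_0^\infty \frac{dt}{t} \int_{\langle\gamma\rangle \backslash \H} p_\H^\kappa(t, z, \gamma^m z)\, dV(z).\]

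Second, I would evaluate this orbital integral in Fermi coordinates $(s,r)$ along the axis of $\gamma$, in which $\gamma^m$ acts by $s \mapsto s + m\ell_\gamma$ and the metric reads $\cosh^2(r)\, ds^2 + dr^2$. The $s$-integration over a fundamental domain for $\langle\gamma\rangle$ contributes the \emph{primitive} length $\ell_\gamma$. For the transverse $r$-integral, I would plug in the explicit radial formula for $p_\H$, swap the order of integration, and reduce the inner integral (via the substitution $v = \sinh r$) to the elementary $\int dv/\sqrt{w^2 - v^2} = \pi$. After including the killing factor $e^{-\kappa t}$, the remaining $t$-integral is the classical Laplace transform $\int_0^\infty t^{-3/2} e^{-at - b^2/t}\, dt = (\sqrt\pi/b)\,e^{-2b\sqrt a}$. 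Assembling the pieces with $\sqrt{1/4+\kappa} = (\tfrac12 + \sqrt{1/4+\kappa}) - \tfrac12$ produces the closed form
\[\mu_X^\kappa\bigl(\mathcal C_X(\gamma^m)\bigr) = \frac{1}{m} \cdot \frac{e^{-m \ell_\gamma(\frac12 + \sqrt{1/4+\kappa})}}{1 - e^{-m\ell_\gamma}}.\]

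Finally, summing over $\gamma \in \mathcal P_X$ and $m \geq 1$, I would expand $(1 - e^{-m\ell_\gamma})^{-1} = \sum_{k\geq 0} e^{-mk\ell_\gamma}$ geometrically and apply $-\log(1-x) = \sum_{m\geq 1} x^m/m$ to collapse the resulting triple sum into exactly $-\log Z_X(\tfrac12 + \sqrt{1/4+\kappa})$. The main obstacle is justifying the interchange of sums and integrals: this rests on absolute convergence of the log-zeta series, which holds precisely under the hypothesis $\tfrac12 + \sqrt{1/4+\kappa} > \delta$ via Lemma \ref{convergence zeta function}. In the geometrically finite infinite-area setting, one should also verify that the orbital integral over the non-compact cylinder $\langle\gamma\rangle\backslash\H$ converges uniformly enough for Fubini; this follows from Gaussian-type decay of $p_\H$ in the distance $d_m(r)$, which grows with $|r|$, so no additional difficulty arises from the ends of $X$.
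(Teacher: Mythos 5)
Your proposal is correct and follows essentially the same route as the paper: unfold the per-class loop mass over the centralizer $\langle\alpha\rangle$ of the primitive element to an orbital integral of $p_\H^\kappa$ over a fundamental domain of the cyclic group, evaluate it to get $\mu_X^\kappa(\mathcal C_X(\gamma^m))=\frac1m\,e^{-m\ell_\gamma(\frac12+\sqrt{1/4+\kappa})}/(1-e^{-m\ell_\gamma})$ (the paper's Lemma \ref{thm:each}), and match the double sum against the logarithmic expansion of the Euler product for $Z_X$. The only (immaterial) differences are computational: you evaluate the transverse integral in Fermi coordinates via the elementary substitution $v=\sinh r$ where the paper uses the half-plane strip and the Abel transform, and you use the closed-form Laplace transform $\int_0^\infty t^{-3/2}e^{-at-b^2/t}\,\d t=(\sqrt\pi/b)e^{-2b\sqrt a}$ where the paper writes the error-function antiderivative.
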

This theorem provides a probabilistic interpretation of the value of the Selberg zeta function on the real interval $(\delta, \infty)$ in terms of the Brownian loop measure with killing.
We discuss some immediate observations of Theorem \ref{thm:zeta} in the remarks below.

\begin{figure}[t]
    \centering
    \includegraphics[width=.9\textwidth]{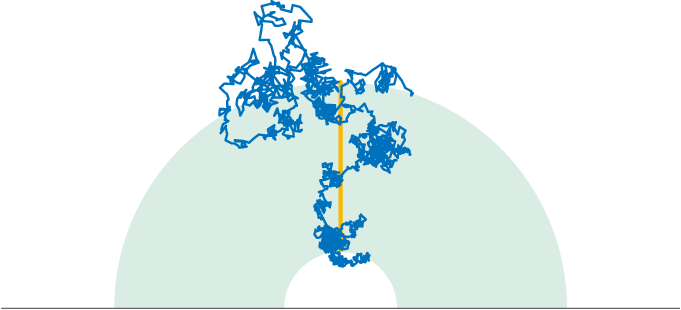}
    \caption{Illustration of a closed geodesic (yellow) and a Brownian loop (blue) of the same homotopic type on the hyperbolic cylinder. Both the geodesic and the Brownian loop are lifted to the universal cover $\H$. The light green region is a fundamental domain of the hyperbolic cylinder.}
    \label{fig:loop}
\end{figure}

\Remarks
1. Since $Z_X(s)$ converges for $s>\delta$, an immediate consequence of Theorem \ref{thm:zeta} is that the total mass of the Brownian loop measure in all essential homotopy classes is {\em finite} when the killing rate satisfies $\frac12+\sqrt{\frac14+\kappa}>\delta$. This generalizes \cite[Corollary 4.9]{BLM} 
where the case $\kappa=0$ is considered. In fact, since $\delta<1$ for $X$ geometrically finite with infinite area (see, for instance, \cite[\S 2.5.2]{Borth}), Theorem~\ref{thm:zeta} shows that the Brownian loop measure without killing ($\kappa=0$) of all loops in essential homotopy classes is finite. It is also worth noting that homotopy questions around the Brownian loop measure with a killing rate were studied in \cite{lejan}.

\noindent
2. Since the eigenvalues and resonances of the Laplacian on $X$ determine $Z_X$, as shown in \cite[Theorem 3.1]{borth_judge_perry}, Theorem \ref{thm:zeta} implies that they also determine the total mass of the Brownian loop measure with killing for homotopically essential loops. A more precise relation is stated in Proposition \ref{proposition:zeta_factor}.

\noindent
3. Note that the Selberg zeta function is not always well defined for geometrically infinite surfaces, as the product would diverge for any value of $s$. In this sense, for the sake of simplicity, all surfaces considered in this paper will be geometrically finite.

\bigskip

An important motivation for studying the total mass of the Brownian loop measure is its relation to the renormalized integral of the heat trace and regularized determinants of the Laplacian. In the compact case, with or without boundary, it was remarked in \cite{dubedat, lejan_markov_loops} that the logarithm of the zeta-regularized determinant  of the Laplacian $-\log\mathrm{det}_\zeta\Delta$ could be interpreted as the total mass of the Brownian loop measure. The following theorem provides a new renormalization procedure that elucidates this link. 

\begin{theorem}\label{thm:det}
    Suppose $X$ and $\kappa$ are as in Theorem \ref{thm:zeta}. 
    \begin{enumerate}[label=(\roman*)]
        \item Let $E_{X,\kappa}(t)$ be the renormalized heat operator as in Definition \ref{definition:trace}. Then 
        \[ \sum_{\gamma\in\mathcal{P}_X}\sum_{m=1}^\infty \mu_X^\kappa\big(\mathcal{C}_X(\gamma^m)\big) = \int_0^\infty \frac{1}{t}\operatorname{Tr}E_{X,\kappa}(t)\,\mathrm{d}t. \]
        \label{item:normlized_trace}

        \item Suppose $X$ is compact. Then, as $\kappa \to 0^+$,
    \begin{equation*}
        \sum_{\gamma\in \mathcal P_X}\sum_{m=1}^{\infty}\mu_X^\kappa(\mathcal C_X(\gamma^m))=-\log\mathrm{det}_\zeta\Delta+\mathrm{Area}(X)E-2\gamma_{\textup{EM}} -\log\kappa +o(1),
    \end{equation*}
    where  $\gamma_{\mathrm{EM}}$ is the Euler--Mascheroni constant, $\zeta$ is the Riemann zeta function, and $E=(4\zeta'(-1)-1/2+\log(2\pi))/(4\pi)$. \label{item:compact_expansion}
    
        \item Suppose $X$ has infinite area. Let $s=\frac 12 +\sqrt{\frac 14 +\kappa}$, and $\det_0(\Delta_X+s(s-1))$ be the regularized determinant of $\Delta_X$ as in \cite[\S 2]{borth_judge_perry} (called $D(s)$ there; see also \S \ref{section:infinite}). Then: 
        \[\begin{split} \sum_{\gamma\in\mathcal{P}_X}\sum_{m=1}^\infty\mu_X^\kappa(\mathcal{C}_X(\gamma^m)) 
        = & -\log {\det}_0(\Delta_X+s(s-1)) +Gs(1-s)+F\\
         & -n_c\log G_0(s)+\log\chi(X)\log G_{\infty}(s), 
        \end{split}\]
    where $F$ and $G$ are the two constants of integration left unspecified in the definition of $\det_0(\Delta_X+s(s-1))$, $n_c$ is the number of cusps of our surface, $\chi(X)$ is the Euler characteristic of $X$, $G_0(s)=2^{-s}(s-\frac12)^{\frac12}\Gamma(s-\frac12)$, and $G_\infty(s)=(2\pi)^{-s}\Gamma(s)G(s)^2$, where $G$ is the Barnes $G$-function (see \cite[p.56]{Borth}). \label{item:noncompact_expansion}
    \end{enumerate}
\end{theorem}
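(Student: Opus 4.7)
The common starting point is Theorem \ref{thm:zeta}, which identifies the left-hand side of each assertion with $-\log Z_X(s)$ for $s = \tfrac12 + \sqrt{\tfrac14 + \kappa}$. The algebraic identity $s(s-1) = \kappa$ will be used throughout.

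For \emph{(i)}, the plan is to unwind the definitions. Decomposing a Brownian loop by its free homotopy class, each essential class $[\gamma^m]$ contributes
\[
\mu_X^\kappa(\mathcal{C}_X(\gamma^m)) = \int_X \int_0^\infty \frac{e^{-\kappa t}}{t}\, p_t^X(x,x;[\gamma^m])\, dt\, dx,
\]
where $p_t^X(x,x;[\alpha])$ denotes the contribution to the diagonal heat kernel from loops in class $[\alpha]$. Summing over all essential classes, using $\sum_{[\alpha]} p_t^X(x,x;[\alpha]) = p_t^X(x,x)$ and the fact that the contractible contribution at $x$ equals $p_t^{\H}(o,o)$ (by lifting a contractible loop to the universal cover $\H$ and using its homogeneity), yields
\[
\sum_{\gamma, m} \mu_X^\kappa(\mathcal{C}_X(\gamma^m)) = \int_0^\infty \frac{e^{-\kappa t}}{t} \int_X \big(p_t^X(x,x) - p_t^{\H}(o,o)\big)\, dx\, dt,
\]
which matches $\int_0^\infty t^{-1}\,\mathrm{Tr}\,E_{X,\kappa}(t)\,dt$ by Definition \ref{definition:trace}. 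The only nontrivial technical point is Fubini, using matching short-time heat-kernel asymptotics on $X$ and $\H$, and decay at infinity of the integrand in the funnels/cusps when $X$ has infinite area.

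For \emph{(ii)}, starting from the integral representation in \emph{(i)}, I would split $\int_0^\infty \frac{e^{-\kappa t}}{t}K(t)\,dt$, with $K(t) := \mathrm{Tr}(e^{-t\Delta}) - \mathrm{Area}(X)\,p_t^{\H}(o,o)$, at a cutoff $T$. The tail contains the factor $\int_T^\infty \frac{e^{-\kappa t}}{t}\,dt = E_1(\kappa T)$, whose asymptotic $E_1(x) = -\gamma_{\textup{EM}} - \log x + O(x)$ immediately produces $-\log\kappa$ and one copy of $-\gamma_{\textup{EM}}$. The short-time piece and the compensated tail $\int_T^\infty \frac{K(t)-1}{t}\,dt$ are finite, and a Mellin-type evaluation using $\int_0^\infty t^{s-1}(\mathrm{Tr}(e^{-t\Delta}) - 1)\,dt = \Gamma(s)\zeta_0(s)$ at $s = 0$, combined with the Laurent expansion $\Gamma(s) = s^{-1} - \gamma_{\textup{EM}} + O(s)$, produces $-\log \det_\zeta \Delta$ together with the second $-\gamma_{\textup{EM}}$; the corresponding Mellin evaluation of $\int_0^\infty t^{s-1}p_t^{\H}(o,o)\,dt$ at $s = 0$ uses the Plancherel representation on $\H$ and the special value $\zeta'(-1)$ to produce the area coefficient $E$. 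Equivalently, one could substitute the classical D'Hoker--Phong/Sarnak formula $\log Z_X(s) = \log \det_\zeta(\Delta + s(s-1)) + \mathrm{Area}(X)\,P(s) + \chi(X)\,Q(s)$ into $-\log Z_X(s)$ and expand at $s = 1$, using $\log \det_\zeta(\Delta + \kappa) = \log \kappa + \log \det_\zeta \Delta + o(1)$ from the simple zero eigenvalue. Either route, the main obstacle is the careful constant-tracking needed to verify that all topology-dependent residues collapse into the stated universal form, with the $\chi(X)$-dependent pieces exactly cancelling.

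For \emph{(iii)}, the approach is parallel and more direct: combine Theorem \ref{thm:zeta} with the representation of $Z_X(s)$ in terms of $\det_0(\Delta_X + s(s-1))$ given in \cite{borth_judge_perry}, which has the schematic form
\[
\log Z_X(s) = -Fs(s-1) - G - n_c \log G_0(s) + \chi(X)\log G_\infty(s) + \log \det_0(\Delta_X + s(s-1)),
\]
where $F$ and $G$ are the two integration constants intrinsic to the definition of $\det_0$. Negating both sides and using $s(1-s) = -s(s-1)$ reproduces the stated identity. The only real step is to verify that the ambiguity constants $F, G$ appearing in our statement are precisely those in \cite{borth_judge_perry}, after which the equality is an algebraic rewrite.
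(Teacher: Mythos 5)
Your overall strategy coincides with the paper's: item \ref{item:normlized_trace} by unwinding the loop measure into a renormalized heat trace (the paper's Proposition \ref{link-trace-blm}), item \ref{item:compact_expansion} by combining the $-\log Z_X(s)$ identification with the Sarnak-type relation between $Z_X$ and $\det_\zeta\Delta$ plus the exponential-integral asymptotics $\int_\kappa^\infty t^{-1}e^{-t}\,\mathrm dt=-\gamma_{\textup{EM}}-\log\kappa+O(\kappa)$, and item \ref{item:noncompact_expansion} by substituting the Borthwick--Judge--Perry factorization (Proposition \ref{proposition:zeta_factor}) into $-\log Z_X(s)$. Items \ref{item:compact_expansion} and \ref{item:noncompact_expansion} of your proposal are essentially the paper's argument; for \ref{item:compact_expansion} the paper does the constant-tracking you defer by importing Naud's identity
$-\log\det_\zeta\Delta=-\mathrm{Area}(X)E-\gamma_{\textup{EM}}+\int_0^1 t^{-1}S_X(t)\,\mathrm dt+\int_1^\infty t^{-1}(S_X(t)-1)\,\mathrm dt$, where $S_X(t)=\mathrm{Tr}\,E_{X,0}(t)$, and then estimating $\int(1-e^{-\kappa t})S_X(t)t^{-1}\mathrm dt$ on the pieces $[0,1]$, $[1,-\log\kappa]$, $[-\log\kappa,\infty)$; your Mellin-at-$s=0$ route would work but you have not actually verified that the $\chi(X)$-dependent residues assemble into $\mathrm{Area}(X)E-\gamma_{\textup{EM}}$, which is the entire content of that step.

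There is one genuine gap, in item \ref{item:normlized_trace}. You write the renormalized trace as $\int_X\bigl(p_t^X(x,x)-p_t^{\H}(o,o)\bigr)\,\mathrm dx$ and assert that this matches Definition \ref{definition:trace}. It does not when $X$ has cusps: the definition of $E_{X,\kappa}$ subtracts, in addition to the identity term, the parabolic contributions $\sum_{l=1}^{n_c}\sum_{h\in[c_l]}p_\H^\kappa(t,z,h\cdot z)$. This subtraction is forced by the left-hand side of the identity, which runs only over \emph{essential} classes $\mathcal C_X(\gamma^m)$ with $\gamma\in\mathcal P_X$: the conjugacy classes of $\Gamma\setminus\{\mathrm{Id}\}$ split into hyperbolic ones (in bijection with $\{\gamma^m\}$) and parabolic ones (loops homotopic to cusps, which are non-essential and carry infinite loop-measure mass, hence are excluded from the sum). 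So your intermediate step ``summing over all essential classes gives $p_t^X(x,x)$ minus the contractible contribution'' silently assumes every non-contractible class is essential, which fails precisely when $n_c>0$. The fix is immediate --- group $\Gamma\setminus\{\mathrm{Id}\}$ by conjugacy class and discard the parabolic ones, which is exactly what Definition \ref{definition:trace} encodes --- but as written your chain of equalities is false for surfaces with cusps.
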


\Remarks
1. In view of Theorem \ref{thm:zeta}, the asymptotic expansion in item~\ref{item:compact_expansion} of Theorem~\ref{thm:det} is a consequence of an identity established by Sarnak \cite[Theorem 1]{sarnak}. A direct proof of the asymptotic expansion is provided in \S \ref{compact case of asymptotic expansion}. 

\noindent
2. In the context of closed smooth Riemannian surfaces $(M,g)$, the mass of loops on $M$ under $\mu_M^\kappa$ with a total quadratic variation of at least $4\varpi>0$ is defined as
\[ \mu_M^\kappa(\text{loops with quadratic variation}\geq 4 \varpi)\coloneqq \int_{\varpi}^\infty \mathrm{Tr}(e^{-t(\Delta_M+\kappa)}) \,\frac{\mathrm d t}{t}. \]
For a compact smooth Riemannian surface $(M,g)$, M. Ang et al. \cite{AngPark} showed that the mass of loops with quadratic variation of at least $4\varpi>0$, as defined above, admits the asymptotic expansion
\[ \frac{\mathrm{vol}_g(M)}{4\pi \varpi}
 - \frac{\chi(M)}{6}
 \left(\log \varpi + \gamma_{\textup{EM}}\right)
 - \log \kappa
 - \log {\det}_{\zeta} \Delta_M
 + \epsilon(\kappa,\varpi), \]
where ${\det}_{\zeta}\Delta_M$ is the zeta-regularized determinant of the Laplacian (see \S \ref{compact case of asymptotic expansion}), and $\epsilon(\kappa,\varpi)$ tends to zero in the limit as first $\kappa$ and then $\varpi$ are sent to zero. 
In the case where $M$ is hyperbolic, item~\ref{item:compact_expansion} of Theorem \ref{thm:det} is an analog of this asymptotic expansion, where no cut-off of loops with small quadratic variation is needed. 

\noindent
3. It is worth noting that a renormalization of the Laplacian determinant according to homotopy classes of the Brownian loop measure has already been carried out in the case without killing; see \cite[Proposition 4.11]{BLM}.

\noindent
4. For hyperbolic surfaces $X$ with infinite area, the Laplacian has a continuous spectrum; hence, its determinant cannot be renormalized by the zeta function regularization approach. Instead, D. Borthwick, C. Judge and P. A. Perry \cite[\S 2]{borth_judge_perry} defined the regularized determinant of $\Delta_X$ using the ``0-trace'' of the resolvent of the Laplacian (see \S \ref{section:infinite}). With such regularization, they are able to establish an identity connecting $\det_0(\Delta_X+s(s-1))$ (called $D(s)$ in \cite{borth_judge_perry}) and the Selberg zeta function; see \cite[Theorem 3.2]{borth_judge_perry}. Item~\ref{item:noncompact_expansion} of Theorem \ref{thm:det} provides a probabilistic interpretation to this result.  

\vspace{8pt}
\noindent{\bf Structure of the paper.}
In ~\S \ref{def_loop_measure}, we present the Brownian loop measure and its main properties. ~\S \ref{section Selberg zeta function} introduces the Selberg zeta function and highlights its spectral properties. In ~\S \ref{blm}, we consider the loop measure for Brownian motion with a constant killing rate, and generalize \cite[Lemma 3.2]{BLM} to the case with killing. This leads to 
the proofs of Theorems~\ref{thm:zeta}. In ~\S \ref{results}, we discuss renormalizations of the heat trace and determinants of Laplacians, and connect them with the mass of the Brownian loop measure, proving Theorem \ref{thm:det}.

\vspace{8pt}
\noindent{\bf Acknowledgments.}
The authors would like to thank Yilin Wang for introducing them to the subject and for many helpful discussions. We also thank Maciej Zworski for his interest in this work, and  
Giacomo Spriano for a number of productive conversations. JW is supported by Simons Foundation through a postdoctoral position at the Institut des Hautes Études Scientifiques. This work has received funding from the Swiss State Secretariat for Education, Research and Innovation (SERI): MB25.00004.

\section{Preliminaries}\label{preliminaries}

In this section, we review the necessary facts surrounding the Brownian loop measure and the Selberg zeta function of hyperbolic surfaces.

\subsection{Brownian loop measure.}\label{def_loop_measure}

Let $(X,g)$ be a Riemannian manifold without boundary of dimension $n$. We define Brownian motion (ran at speed 2) $W$ on $X$ as the diffusion process generated by $-\Delta_g$, where $\Delta_g$ is the (positive) Laplace--Beltrami operator
$$\Delta_g=-\text{div}_g\nabla_g=-\frac{1}{\sqrt{|g|}}\sum_{i,j=1}^n\partial_i(\sqrt{|g|}g^{ij}\partial_j).$$
We refer to \cite{lawler2005} and \cite{diffusion_processes} for the basics of Brownian motion.
This definition naturally generalizes that of Brownian motion in the Euclidean space $\mathbb{R}^n$. 
For $t\geq 0$, let $C([0,t];X)$ be the space of continuous functions equipped with the distance function 
\[ d_X^{\infty}(W, W')\coloneqq \sup_{s\in [0,t]} |W_s-W'_s|_X, \textup{ for } W, \ W'\in C([0,t];X). \]
For $x\in X$, let $\W_x^t(X)$ 
be the probability measure of the path of Brownian motion (ran at speed 2) starting at $x$ and running until time $t$ in the space $C([0,t];X)$.
In other words, for any measurable subset $A\subset C([0,t];X)$, $\mathbb W_x^t(X)(A)$ is the probability that the path of Brownian motion starting from $x$ until time $t$ is in $A$.
Notice that the total quadratic variation of a path under $\W_x^t(X)$ is almost surely $4t$. We can then decompose $\W_x^t(X)$ according to the endpoint 
$y\in X$ 
\begin{equation*}
    \W_{x\to y}^t(X)\coloneqq \lim_{\varepsilon\to 0^+}\frac{\W_x^t(X)\mathbbm{1}_{\{W_0=x, \ W_t\in D_{\varepsilon, X}(y)\}}}{\text{vol}_g(D_{\varepsilon, X}(y))}
\end{equation*}
where $\mathbbm 1_K$ denotes the restriction to $K\subset C([0,t];X)$ such that $\W_x^t(X)\mathbbm 1_K(A)\coloneqq \W_x^t(X)(K\cap A)$ for measurable $A\subset C([0,t];X)$, and $D_{\varepsilon,X}(y)$ is the geodesic ball of radius $\varepsilon$ in $X$ around $y$. For detailed justification and interpretation of $\mathbb W_{x\to y}^t(X)$, one can see \cite[\S 5.2]{lawler2005}.
\begin{lemma}\label{lemma:disintegrate}
    Suppose $t\geq 0$, $x,y\in X$. Let $\mathbb W_{x\to y}^t(X)$ be as above. Then 
    \[ \mathbb W_{x}^t(X)=\int_X \mathbb W_{x\to y}^t(X) \,\mathrm{d vol}_g(y). \]
    Moreover, the total mass of $\mathbb W_{x\to y}^t(X)$ on $C([0,t];X)$, denoted $|\mathbb W_{x\to y}^t(X)|$ below, is the integral kernel $p_X(t,x,y)$ of the heat operator.
\end{lemma}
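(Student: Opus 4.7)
The plan is to reduce both claims to two standard facts about Brownian motion generated by $-\Delta_g$ on a smooth Riemannian manifold: (i) the endpoint $W_t$ under $\W_x^t(X)$ has a jointly continuous, strictly positive density $p_X(t,x,\cdot)$ with respect to $\mathrm{dvol}_g$; and (ii) conditional on $\{W_t=y\}$, the regular conditional law of the full path is a well-defined Borel probability measure on $C([0,t];X)$, namely the Brownian bridge from $x$ to $y$. Combined with Lebesgue differentiation on $(X,g)$, these inputs will identify the $\varepsilon\to 0^+$ limit that defines $\W_{x\to y}^t(X)$.

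For the total-mass claim, I would specialize to $A=C([0,t];X)$: the numerator in the definition of $\W_{x\to y}^t(X)$ reduces to
\[ \W_x^t(X)\bigl(\{W_t\in D_{\varepsilon,X}(y)\}\bigr)=\int_{D_{\varepsilon,X}(y)}p_X(t,x,z)\,\mathrm{dvol}_g(z), \]
and dividing by $\mathrm{vol}_g(D_{\varepsilon,X}(y))$ together with the continuity of $z\mapsto p_X(t,x,z)$ at $z=y$ yields $|\W_{x\to y}^t(X)|=p_X(t,x,y)$ via Lebesgue differentiation.

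For the disintegration identity, a standard $\pi$-$\lambda$ argument reduces matters to checking equality on cylinder events $A=\{W_{t_1}\in B_1,\ldots,W_{t_k}\in B_k\}$ with $0<t_1<\cdots<t_k<t$ and $B_i\subset X$ Borel. For such $A$, the Markov property at $t_1,\ldots,t_k$ together with the Chapman--Kolmogorov relation gives
\[ \W_x^t(X)\bigl(A\cap\{W_t\in D_{\varepsilon,X}(y)\}\bigr)=\int_{B_1\times\cdots\times B_k\times D_{\varepsilon,X}(y)}\prod_{i=0}^{k}p_X(t_{i+1}-t_i,z_i,z_{i+1})\prod_{j=1}^{k+1}\mathrm{dvol}_g(z_j), \]
with the convention $t_0=0$, $t_{k+1}=t$, $z_0=x$. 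Dividing by $\mathrm{vol}_g(D_{\varepsilon,X}(y))$ and sending $\varepsilon\to 0^+$, continuity of $p_X$ identifies the limit with $\W_{x\to y}^t(X)(A)$, expressed by the same integral with $z_{k+1}=y$ fixed. Integrating against $\mathrm{dvol}_g(y)$, swapping integrals by Fubini, and invoking stochastic completeness of $X$ in the form $\int_X p_X(t-t_k,z_k,y)\,\mathrm{dvol}_g(y)=1$ then recovers the Chapman--Kolmogorov expression for $\W_x^t(X)(A)$.

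I do not anticipate serious obstacles. The continuity and positivity of $p_X$ are standard parabolic regularity on smooth Riemannian manifolds, and the monotone-class extension from cylinder sets to arbitrary measurable subsets of $C([0,t];X)$ is routine. The only mildly substantive assumption is stochastic completeness, which is automatic for the geometrically finite hyperbolic surfaces considered in the paper and so does not represent a real obstruction.
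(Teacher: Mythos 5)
Your proposal is correct and, for the total-mass claim, is essentially identical to the paper's argument: both reduce $|\W_{x\to y}^t(X)|$ to $\lim_{\varepsilon\to 0^+}\mathrm{vol}_g(D_{\varepsilon,X}(y))^{-1}\int_{D_{\varepsilon,X}(y)}p_X(t,x,z)\,\mathrm{dvol}_g(z)$ and conclude by Lebesgue differentiation. For the disintegration identity the paper simply cites \cite[\S 5.2]{lawler2005}, whereas you supply the standard cylinder-set/monotone-class argument that the reference contains; that is a legitimate filling-in rather than a different route, and your remark about stochastic completeness correctly flags the only hypothesis needed beyond smoothness.
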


\begin{proof}
For the disintegration formula, see \cite[\S 5.2]{lawler2005}. 
For the total mass of $\mathbb W_{x\to y}^t(X)$, notice that 
\[ |\mathbb W^t_{x\to y}(X)| = \lim_{\epsilon \to 0+}\frac{\mathbb P_x(W_t\in D_{\epsilon,X}(y))}{\mathrm{vol}_g(D_{\epsilon,X}(y))} = \lim_{\epsilon\to 0+}\frac{1}{\mathrm{vol}_g(D_{\epsilon,X}(y))}\int_{D_{\epsilon,X}(y)}p(t,x,y)\,\mathrm{dvol}_g(y) \]
where $\mathbb P_x$ is the measure of Brownian motion starting from $x$. The result then follows from the Lebesgue differentiation theorem.
\end{proof}

Let $\mathcal C_X^*$ be the set of parametrized loops:
\[ \mathcal C_X^*\coloneqq \{ W_\bullet \in C([0,t]; X) \mid t\geq 0, \ W_0 = W_t \}. \]
The \textit{parametrized Brownian loop measure} on $X$ is the measure on $\mathcal C_X^*$ defined as 
$$\mu_X^{*}\coloneqq \int_0^\infty \frac{\d t}{t}\int_X \W_{x\to x}^t(X) \,\text{dvol}_g(x).$$
We now define an equivalence relation on the space of parametrized loops $\mathcal C_X^*$. 
Let $W\in C([0,t]; X)$ such that $W_0=W_t$, and $W'\in C([0,t'];X)$ such that $W'_0=W'_{t'}$ be two parametrized loops in $\mathcal C_X^*$. We say $W\sim W'$ if and only if there exists an orientation preserving homeomorphism $\sigma$ from $[0,t]/(0\sim t)$ to $[0, t']/(0\sim t')$ such that $W'_{\sigma(s)}=W_s$ for all $s\in [0,t]$. The quotient space $\mathcal C_X\coloneqq \mathcal C_X^*/\sim$ consists of unrooted and unparametrized --- but oriented --- loops. The metric on $\mathcal C_X$ is defined by 
\[ d_X^{\textup{loop}}([W],[W'])\coloneqq \inf_{\sigma}\sup_{s}|W'_{\sigma(s)}-W_{s}|_X \]
where $\sigma$ runs through all orientation preserving homeomorphisms from $[0,t]/(0\sim t)$ to $[0,t']/(0\sim t')$, $s$ goes through $[0,t]$, and $|\bullet|_X$ is the geodesic distance on $X$.
The \textit{Brownian loop measure} $\mu_X$ is the measure on $\mathcal L$ induced by $\mu_X^*$ on $\mathcal C_X^*$, in the sense that $\mu_X(A) = \mu_X^*(\phi^{-1}(A))$ where $\phi: \mathcal C_X^*\to \mathcal C_X$ is the quotient map and $A$ is any measurable set in $\mathcal C_X$.  

For an oriented closed primitive geodesic $\gamma\in \mathcal P_X$, we denote $\mathcal C_X(\gamma)$ as the subset of $\mathcal C_X$ that consists of unparametrized, unrooted loops that are freely homotopic to $\gamma$.

The Brownian loop measure is Borel with respect to the topology induced by $d_X^{\textup{loop}}$ and enjoys the following properties. 
\begin{proposition}[{\cite[Proposition 6]{lawler}}]
    The Brownian loop measure satisfies the restriction property and is conformally invariant, i.e.
    \begin{enumerate}
        \item If $X'\subset X$, then $\d\mu_{X'}(\eta)=\mathbbm{1}_{\eta\subset X'}\d\mu_X(\eta)$ where $\mathbbm 1_{\eta\subset X'}$ is the indicator function;
        \item If $(X_1, g_1)=(X,g)$ and $(X_2, g_2)=(X,e^{2\sigma}g)$ are two conformally equivalent Riemann surfaces, with $\sigma\in C^\infty(X,\R)$, then $\mu_{X_1}=\mu_{X_2}$.
    \end{enumerate}
\end{proposition}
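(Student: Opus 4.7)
The plan is to establish each claim first at the level of parametrized loops in $\mathcal C_X^*$, and then push forward to $\mathcal C_X$ via the quotient map~$\phi$. For the restriction property, the essential input is the identity
\[ \W_{x\to y}^t(X')=\mathbbm{1}\{\omega([0,t])\subset X'\}\,\W_{x\to y}^t(X),\qquad x,y\in X', \]
which says that Brownian motion on $X'$ is just Brownian motion on $X$ killed on $\partial X'$; it is a standard consequence of the strong Markov property, and in view of Lemma~\ref{lemma:disintegrate} is equivalent to the Feynman--Kac representation of the Dirichlet heat kernel $p_{X'}$. Plugging this into the definition of $\mu_{X'}^*$, and using that for $x\notin X'$ the indicator $\mathbbm{1}\{\omega\subset X'\}$ already vanishes on every loop rooted at $x$, one obtains $\d\mu_{X'}^*=\mathbbm{1}\{\omega\subset X'\}\,\d\mu_X^*$. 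Since the event $\{\omega\subset X'\}$ is invariant under the equivalence relation~$\sim$, this identity descends to $\mu_X$ on $\mathcal C_X$.

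For conformal invariance, I would exploit the fact that on a two-dimensional Riemannian manifold $\Delta_{e^{2\sigma}g}=e^{-2\sigma}\Delta_g$, which allows one to realize Brownian motion on $X_2$ as a time change of Brownian motion on $X_1$: $\tilde W_s=W_{\alpha(s)}$ with $\alpha'(s)=e^{-2\sigma(\tilde W_s)}$. Because this reparametrization preserves the underlying path in~$X$, the unparametrized loops on $(X,g_1)$ and $(X,g_2)$ coincide as elements of $\mathcal C_X$; only the joint law of the duration and the base point changes. The task is therefore to compare
\[ \int_0^\infty\frac{\d t}{t}\int_X \W_{x\to x}^t(X_i)\,\mathrm{dvol}_{g_i}(x),\qquad i=1,2, \]
via the path-dependent change of variable $\tilde t=\int_0^t e^{2\sigma(\omega_u)}\,\d u$, and to check that its Jacobian, combined with the conformal transformation law of the pinned Brownian bridge kernel $p_g(t,x,x)\,\mathrm{dvol}_g(x)$, exactly absorbs both the factor $e^{2\sigma}$ relating the two volume forms and the discrepancy between $\d t/t$ and $\d\tilde t/\tilde t$.

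The hard part will be this last cancellation: unlike the restriction property, conformal invariance is not formal and relies on the precise transformation law of the pinned Brownian bridge measure under a path-dependent time change. The cleanest route I know is to follow Lawler's original planar proof in \cite{lawler}: first reduce to the case where $\sigma$ is compactly supported in a coordinate chart using the restriction property of part~(1) together with a partition-of-unity argument, so that the infinite-dimensional change of variables can be carried out in a single Euclidean chart where it is already understood, and then extend to general $\sigma$ by approximation.
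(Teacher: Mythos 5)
The paper does not actually prove this proposition: it is quoted directly from Lawler--Werner \cite[Proposition 6]{lawler} with no argument supplied, so there is no internal proof to compare yours against. Judged on its own terms, your treatment of part (1) is correct and standard: the identity $\W^t_{x\to y}(X')=\mathbbm 1\{\omega([0,t])\subset X'\}\,\W^t_{x\to y}(X)$ for the Dirichlet-killed motion, inserted into the definition of $\mu^*_{X'}$, gives the restriction property for rooted loops, and since the event $\{\omega\subset X'\}$ is invariant under the reparametrization equivalence $\sim$, the identity descends through the quotient map $\phi$ to $\mathcal C_X$.

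For part (2) you correctly identify the mechanism (the two-dimensional identity $\Delta_{e^{2\sigma}g}=e^{-2\sigma}\Delta_g$, the time change $\tilde t=\int_0^t e^{2\sigma(\omega_u)}\,\d u$, and the required cancellation between the volume Jacobian $e^{2\sigma}$, the heat-kernel transformation, and the $\d t/t$ weight), but the route you propose for carrying out the hard step has a genuine gap. The restriction property only controls loops confined to a fixed subdomain; it provides no decomposition of the measure of loops that exit every coordinate chart, and the loop measure of $(X,e^{2\sigma}g)$ depends nonlinearly and nonlocally on $\sigma$, so a partition of unity applied to $\sigma$ does not reduce the problem to a single Euclidean chart. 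This is also not how the cited proof proceeds: Lawler--Werner's argument is global and rests on the root-resampling observation that, under the $\d t/t$ weighting, the root of a loop is uniformly distributed along the loop with respect to its time parametrization; one then checks that the pointwise factor $e^{2\sigma}$ produced at the root by the change of volume form, once averaged along the loop in this way, is exactly the ratio of the two total durations and hence converts one $\d t/t$ normalization into the other. Replacing your localization step by this global computation (or by a direct verification of the time-change identity for the rooted measure followed by unrooting) would make the plan sound.
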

Notice in particular that $X'$ can have a boundary. When this is the case, we consider Brownian motion with Dirichlet conditions, i.e., that is killed upon hitting the boundary of $X'$.
The conformal invariance property allows us to view the Brownian loop measure as a measure on Riemann surfaces, and not merely on Riemannian ones.


\subsection{Selberg zeta functions}\label{section Selberg zeta function}

Let $X$ be a complete hyperbolic surface, and $\delta$ be the exponent of convergence of the associated Poincaré series. We have the following convergence lemma.

\begin{lemma} \label{convergence zeta function}
    The product defining the Selberg zeta function $Z_X$ converges for any $s \in \C$ such that $\mathrm{Re}\, s>\delta$. Moreover, $Z_X$ admits a meromorphic continuation to the entire complex plane. It is an entire function if $X$ has no cusps.
\end{lemma}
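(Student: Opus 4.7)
The plan is to split the statement into three parts: absolute convergence on $\mathrm{Re}(s) > \delta$, the meromorphic continuation to all of $\mathbb{C}$, and the absence of poles when $X$ has no cusps. The first part is a direct computation, while the latter two rely on established machinery and can be essentially reduced to citations.

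For convergence, I would take the logarithm of the product and expand using $\log(1-x) = -\sum_{m \geq 1} x^m/m$ to obtain
\[ -\log Z_X(s) \;=\; \sum_{\gamma \in \mathcal{P}_X} \sum_{m=1}^\infty \frac{e^{-m s \ell_\gamma}}{m\left(1 - e^{-m \ell_\gamma}\right)}. \]
The systole of $X$ gives a uniform positive lower bound on $\ell_\gamma$, so the denominators are bounded away from zero, and absolute convergence reduces to verifying $\sum_{\gamma \in \mathcal{P}_X} e^{-\sigma \ell_\gamma} < \infty$ for every $\sigma > \delta$. This is the standard Poincar\'e-series comparison: each primitive closed geodesic of length $\ell_\gamma$ corresponds to a conjugacy class of primitive hyperbolic elements of $\Gamma$ of translation length $\ell_\gamma$, and picking a point $z$ on the axis yields $d(z, h \cdot z) = \ell_h$ for the associated $h \in \Gamma$. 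Summing over a set of representatives and using the very definition of $\delta$ as the exponent of convergence of $\sum_{h \in \Gamma} e^{-\sigma d(z,hz)}$ gives the bound, hence convergence for $\mathrm{Re}(s) > \delta$.

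The meromorphic continuation to $\mathbb{C}$ is the main technical input, and I would invoke the Selberg trace formula (compact case) and its generalization to geometrically finite hyperbolic surfaces developed by Patterson--Perry and exposited in \cite{Borth}. The key identity there expresses the logarithmic derivative $Z_X'(s)/Z_X(s)$ in terms of the (renormalized or $0$-trace of the) resolvent $(\Delta_X + s(s-1))^{-1}$, together with contributions from the identity, elliptic, and parabolic conjugacy classes. Since the resolvent of the Laplacian admits a meromorphic continuation to all of $\mathbb{C}$, so does $Z_X'/Z_X$; integrating, together with control of the growth of the partial products via the prime geodesic counting estimate, produces the meromorphic extension of $Z_X$ itself.

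Finally, the claim that $Z_X$ is entire when $X$ has no cusps is obtained by tracking the sources of poles in the decomposition above: the resolvent poles (eigenvalues and resonances) and the identity contribution give rise only to \emph{zeros} of $Z_X$ (the spectral and trivial zeros, the latter concentrated at non-positive integers with multiplicities proportional to $\chi(X)$), while genuine \emph{poles} of $Z_X$ arise solely from the scattering operator attached to the cusps. Hence, when $X$ has no cusps, no such poles appear and the meromorphic extension is in fact entire. The main obstacle is not the convergence step but the bookkeeping in the trace-formula derivation of the meromorphic extension, which is why deferring to \cite{Borth} is the practical approach for this preliminary lemma.
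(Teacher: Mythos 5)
Your overall structure matches the paper's: a direct comparison with the Poincar\'e series for the convergence on $\mathrm{Re}(s)>\delta$ (the paper likewise reduces, via the logarithmic expansion, to the convergence of $\sum_{\gamma}e^{-s\ell_\gamma}$), and a citation of the trace-formula/resolvent machinery (Guillop\'e, Borthwick) for the meromorphic continuation and for entirety in the absence of cusps, which is exactly how the paper handles those two claims. However, there is a genuine gap at the crux of the convergence step. You write that picking a point $z$ on the axis of $h$ gives $d(z,h\cdot z)=\ell_h$ and that summing over representatives then follows ``from the very definition of $\delta$.'' But the Poincar\'e series $\sum_{h\in\Gamma}e^{-\sigma d(z,h\cdot z)}$ is taken at a \emph{fixed} base point $z$, whereas your point on the axis varies with $\gamma$; you therefore end up comparing each term of $\sum_\gamma e^{-\sigma\ell_\gamma}$ with a term of a \emph{different} Poincar\'e series, which does not yield convergence. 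Worse, for a fixed $z$ the inequality you get for free is $d(z,h\cdot z)\ge \ell_h$, i.e.\ $e^{-\sigma d(z,h\cdot z)}\le e^{-\sigma\ell_h}$, which is the wrong direction: you need an \emph{upper} bound on $d(z,h\cdot z)$ in terms of $\ell_\gamma$ for a suitable representative $h$ of the conjugacy class.

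The fix is exactly the paper's argument: fix $z$ in a fundamental domain $F$, choose for each $\gamma$ a lift $\tilde\gamma$ of the closed geodesic passing through a point $q\in F$, let $h$ be the corresponding primitive element with axis $\tilde\gamma$, and use the triangle inequality
\[ d(z,h\cdot z)\le d(z,q)+d(q,h\cdot q)+d(h\cdot q,h\cdot z)\le \ell_\gamma+2a, \]
where $a$ bounds the distance from $z$ to the chosen axes (the diameter of $F$ in the paper). This gives $e^{-\sigma\ell_\gamma}\le e^{2a\sigma}e^{-\sigma d(z,h\cdot z)}$ with distinct $h$'s for distinct conjugacy classes, so $\sum_\gamma e^{-\sigma\ell_\gamma}$ is dominated by a constant times a subsum of a single Poincar\'e series. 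Your use of the systole to control the denominators $1-e^{-m\ell_\gamma}$ and to reduce the double sum to $\sum_\gamma e^{-\sigma\ell_\gamma}$ is fine (and is more detail than the paper gives); only the comparison step needs repair.
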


\begin{proof}
    We only recall the proof of convergence when $\mathrm{Re}(s)>\delta$ here.
    Let $F$ be a fundamental domain for $X$, and $z\in F$ be a point in this fundamental domain. Let $\gamma\in \mathcal P_X$. Since $F$ is a fundamental domain, $\gamma$ can be covered by a geodesic $\tilde \gamma$ in the hyperbolic plane $\H$ going through a point in $F$, which we will denote as $q$. Let $h\in \Gamma$ be a primitive element of axis $\tilde \gamma$. By the triangle inequality,
\begin{align*}
    d(z,h\cdot z)&\leq d(z,q)+d(q,h\cdot q)+d(h\cdot q,h\cdot z)
    \leq \ell_\gamma +2a,
\end{align*}
where $a$ is the diameter of $F$. Consequently, for $\mathrm{Re}(s)>\delta$
\[ \sum_{\gamma\in \mathcal P_X} e^{-s\ell_\gamma}\leq e^{2as}\sum_{h\in \Gamma} e^{-s d(z,h\cdot z)}<\infty. \]
This implies the convergence of the product defining $Z_X(s)$.
\end{proof}



Meromorphic continuation of $Z_X(s)$ from $\mathrm{Re}(s)>\delta$ to the whole complex plane was proven by L. Guillopé (\cite{Guillopé1990}) in the infinite-area case. In the finite-area case, it is a consequence of the Selberg trace formula (\cite{Borth}).

\begin{proposition}
    Let $X$ be a compact hyperbolic surface without boundary. Then $Z_X$ is an entire function of order 2, whose zeros consist of the spectral zeros at points $s$ such that $s(1-s)\in\mathrm{Spec}(\Delta_X)$ and topological zeros at points $s=-k$, $k\in\N$.
\end{proposition}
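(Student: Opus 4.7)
The plan is to derive everything from the Selberg trace formula. Recall that for a compact hyperbolic surface $X = \Gamma\backslash\mathbb{H}$, the Selberg trace formula, applied to a test function whose Selberg transform is an appropriate difference of resolvent kernels, relates a spectral side (sum over eigenvalues of $\Delta_X$) with a geometric side (sum over conjugacy classes in $\Gamma$, i.e.\ powers of closed geodesics). My strategy is: (i) compute the logarithmic derivative $Z_X'/Z_X$ on $\mathrm{Re}(s)>1$ and show it analytically continues as a meromorphic function on $\mathbb{C}$, with poles exactly at the spectral and topological points listed; (ii) integrate this to read off the zeros of $Z_X$; (iii) bound the growth via Weyl's law.

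First I would differentiate the product in the definition of $Z_X(s)$ termwise (legitimate in $\mathrm{Re}(s)>\delta=1$ by Lemma \ref{convergence zeta function}) to obtain
\[ \frac{Z_X'(s)}{Z_X(s)} = \sum_{\gamma\in\mathcal{P}_X}\sum_{k=0}^\infty \frac{\ell_\gamma\, e^{-(s+k)\ell_\gamma}}{1-e^{-(s+k)\ell_\gamma}} = \sum_{\gamma\in\mathcal{P}_X}\sum_{m=1}^\infty \frac{\ell_\gamma\, e^{-m s\ell_\gamma}}{1-e^{-m\ell_\gamma}}, \]
after summing the geometric series in $k$. The right-hand side is, up to the factor $(2s-1)^{-1}$, precisely the hyperbolic (length) contribution of the trace formula applied to the test function whose transform is $(s(s-1)+r^2)^{-1} - (s_0(s_0-1)+r^2)^{-1}$ for an auxiliary $s_0$. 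The trace formula then gives an identity of the shape
\[ \frac{1}{2s-1}\frac{Z_X'(s)}{Z_X(s)} = \sum_{n\geq 0}\Big(\frac{1}{s(s-1)+\lambda_n} - \frac{1}{s_0(s_0-1)+\lambda_n}\Big) + I(s) - I(s_0), \]
where $\{\lambda_n\}$ is the spectrum of $\Delta_X$ (with multiplicity) and $I(s)$ is the identity contribution. Using the standard identity contribution for the hyperbolic plane (involving $\psi(s)$ and $\mathrm{Area}(X)$ via Gauss–Bonnet), one finds $I(s)$ can be written as a convergent sum of simple poles with positive residues at $s=-k$, $k\in\mathbb{N}$, with residue $(2g-2)(2k+1)/(2s-1)$, i.e.\ $-\chi(X)(2k+1)/(2s-1)$.

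Next I would read off the zeros. The identity displays $Z_X'/Z_X$ as a meromorphic function on $\mathbb{C}$ whose only poles are simple, located at the values of $s$ with $s(1-s)\in\mathrm{Spec}(\Delta_X)$ (from the spectral sum, giving residues equal to the spectral multiplicity) and at $s=-k$, $k\in\mathbb{N}$ (from $I(s)$, with residue $-\chi(X)(2k+1)$). Integrating and exponentiating, the zeros of $Z_X$ are exactly those listed, with the stated multiplicities. This also confirms that $Z_X$ has no other zeros and is entire (no poles), which aligns with Lemma \ref{convergence zeta function} in the no-cusp case.

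Finally, for the order, I would invoke Weyl's law $\lambda_n \sim 4\pi n/\mathrm{Area}(X)$, which gives the spectral zeros $s_n = \tfrac12 + \sqrt{\tfrac14-\lambda_n}$ (on the critical line once $\lambda_n > 1/4$) satisfying $|s_n|\sim\sqrt{\lambda_n}\sim c\sqrt{n}$; similarly the topological zeros $s=-k$ occur with multiplicity growing linearly in $k$. Hence the counting function of zeros satisfies $N(r)=O(r^2)$, while a standard lower bound shows $N(r)\asymp r^2$. Combined with a Hadamard-type growth bound on $|Z_X(s)|$ for $\mathrm{Re}(s)\to+\infty$ (where $\log|Z_X(s)|$ is bounded) together with the functional equation $Z_X(1-s)=Z_X(s)\cdot\Phi(s)$ with $\Phi$ of order 2, one concludes $Z_X$ has order exactly 2. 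The main technical obstacle is controlling the identity contribution $I(s)$ in the trace formula and justifying the termwise differentiation and analytic continuation; once the correct residues of $I(s)$ at $s=-k$ are extracted, the rest is bookkeeping.
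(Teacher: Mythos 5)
Your proposal is correct and follows essentially the same route as the paper: apply the Selberg trace formula to a difference of resolvent-type test functions, identify the poles of $\frac{1}{2s-1}\frac{Z_X'}{Z_X}(s)$ as the spectral points $s(1-s)=\lambda_n$ and the topological points $s=-k$ with residue $-\chi(X)(2k+1)$, then integrate and use Weyl's law to control the zero-counting function and conclude order $2$. The paper's proof is a more condensed version of exactly this argument (it does not spell out the Hadamard/functional-equation step for the order, which you supply as additional detail).
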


\begin{proof}

Let $1<\mathrm{Re}\,s<a$. Let us apply the Selberg trace formula to the function
$$h(r)=\frac{1}{r^2-(s-\frac 12)^2}-\frac{1}{r^2-(a-\frac 12)^2}.$$
We obtain
    \begin{align*}
        & \frac{1}{2s-1}\frac{Z_X'}{Z_X}(s)-\frac{1}{2a-1}\frac{Z_X'}{Z_X}(a)\\
        & \qquad \qquad=\sum_{k=0}^\infty\left(\frac{1}{\lambda_k-s(1-s)}-\frac{1}{\lambda_k-a(1-a)}\right)
        +\chi(X)\sum_{k=0}^\infty\left(\frac{1}{s+k}-\frac{1}{a+k}\right)
    \end{align*}
where $0=\lambda_0<\lambda_1\leq\lambda_2\leq\cdots$ are the eigenvalues of the Laplacian $\Delta_X$.

The Weyl asymptotics imply that $\lambda_k\sim ck$ for a certain constant $c$. This implies, in particular, that the sum on the right hand side converges absolutely for any $s\in\C$, except at the poles where $s(1-s)=\lambda_k$ or $s=-k$. Upon integration, since the poles are simple and have positive integer residues, we obtain the proposition. 
\end{proof}

Although we will not go into the details here, it is worth noting that the general case of the Selberg trace formula can be obtained from the Selberg zeta function by applying the residue theorem (see \cite{fischer} for more details).

\section{The Brownian Loop Measure in Homotopy Classes}\label{blm}

Let $X$ be a complete geometrically finite hyperbolic surface. The definition of the Brownian loop measure can be generalized to Brownian motion with a killing rate $\kappa$. This has already been mentioned by Y. Le Jan in \cite{lejan} and M. Ang et al. in \cite{AngPark}. We define Brownian motion with a killing rate $\kappa\geq-1/4$ as the process generated by the operator $-\Delta_X-\kappa$. The associated kernel on $X$ is
$$p_X^\kappa(t,z,z')\coloneqq p_X(t,z,z')e^{-\kappa t},$$
where $p_X$ denotes the heat kernel on $X$. The construction of the Brownian loop measure, which we denote by $\mu^\kappa_X$, also holds in the case of Brownian motion with killing. 

The main result of this part is the computation of the mass of $\mu_X^\kappa$ in homotopy classes $\mathcal C_X(\gamma^m)$ for $\gamma\in \mathcal P_X$ and $m\in \N^*$. Notice that this is the case of primary interest, since every essential free homotopy class contains a unique oriented closed geodesic, and the Brownian loop measure in non-essential (i.e., homotopically trivial or homotopic to a cusp) classes is infinite. Indeed, short time asymptotics of the trace of the heat kernel imply that the integral over small times does not converge. 
\begin{lemma}\label{thm:each}
    Suppose $(X,g)$ is a hyperbolic surface without boundary. Then, for any $\gamma\in \mathcal P_X$, $m\in \N^*$, $\kappa\geq-\frac14$,
    \[ \mu^\kappa_X(\mathcal C_X(\gamma^m)) = \frac{1}{m}\frac{e^{ m\left(\frac12-\sqrt{\frac14+\kappa}\right)\ell_\gamma }}{ e^{m\ell_\gamma}-1 }. \]
\end{lemma}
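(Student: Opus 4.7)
My plan is to unfold the loop-measure integral to the universal cover $\H$, compute the spatial integral in Fermi coordinates around the axis of $h_\gamma$, and evaluate the time integral via a classical Bessel-type identity. Throughout, let $s = \tfrac12 + \sqrt{\tfrac14 + \kappa}$, so that $\kappa + \tfrac14 = (s-\tfrac12)^2$.

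First, from the definition of the Brownian loop measure with killing,
\[ \mu_X^\kappa(\mathcal C_X(\gamma^m)) = \int_0^\infty \frac{e^{-\kappa t}}{t}\,\d t\int_X p_X^{[\gamma^m]}(t,x,x)\,\mathrm{dvol}_g(x), \]
where $p_X^{[\gamma^m]}$ denotes the restriction of the heat kernel to paths freely homotopic to $\gamma^m$. Lifting to $\H$ via $\Gamma$ and using $p_X(t,x,x) = \sum_{h\in\Gamma} p_\H(t,\tilde x, h\tilde x)$, the homotopy-class contribution is precisely the sum over $h$ in the conjugacy class $[h_\gamma^m]\subset\Gamma$. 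Since $\Gamma$ is torsion-free and $\gamma$ is primitive, the centralizer of $h_\gamma^m$ in $\Gamma$ is exactly $\langle h_\gamma\rangle$, so the Selberg unfolding trick converts the integral over a fundamental domain $F$ for $\Gamma$ into one over a fundamental domain $F_\gamma$ for $\langle h_\gamma\rangle$:
\[ \int_X p_X^{[\gamma^m]}(t,x,x)\,\mathrm{dvol}_g(x) = \int_{F_\gamma} p_\H(t,\tilde x, h_\gamma^m\tilde x)\,\d\tilde x. \]

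Next I would choose Fermi coordinates $(r,\tau)\in\R\times\R$ about the axis of $h_\gamma$, in which the hyperbolic metric reads $\d r^2 + \cosh^2(r)\,\d\tau^2$, the element $h_\gamma$ acts by $\tau\mapsto\tau+\ell_\gamma$, and one may take $F_\gamma = \R\times[0,\ell_\gamma)$. A direct computation gives $\cosh d((r,\tau),(r,\tau+m\ell_\gamma)) = 1 + 2\cosh^2(r)\sinh^2(m\ell_\gamma/2)$. Substituting McKean's integral representation
\[ p_\H(t,\rho) = \frac{\sqrt{2}\,e^{-t/4}}{(4\pi t)^{3/2}}\int_\rho^\infty\frac{u\,e^{-u^2/(4t)}}{\sqrt{\cosh u - \cosh\rho}}\,\d u, \]
interchanging the order of integration, and using $v = \sinh r$ to reduce the inner $r$-integral to a standard arcsine integral of value $\pi$, one obtains the clean expression
\[ \int_{F_\gamma} p_\H(t,\tilde x, h_\gamma^m\tilde x)\,\d\tilde x = \frac{\ell_\gamma}{2\sinh(m\ell_\gamma/2)}\cdot \frac{e^{-m^2\ell_\gamma^2/(4t) - t/4}}{\sqrt{4\pi t}}, \]
which is the familiar Selberg trace contribution of the conjugacy class $[\gamma^m]$.

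Finally, I would substitute this into the time integral and apply the classical identity $\int_0^\infty t^{-3/2}\,e^{-at - b/t}\,\d t = \sqrt{\pi/b}\,e^{-2\sqrt{ab}}$ with $a = (s-\tfrac12)^2$ and $b = (m\ell_\gamma/2)^2$, so that $2\sqrt{ab} = (s-\tfrac12)\,m\ell_\gamma$. Simplifying via $2\sinh(m\ell_\gamma/2) = e^{m\ell_\gamma/2}-e^{-m\ell_\gamma/2}$ and multiplying numerator and denominator by $e^{m\ell_\gamma/2}$ then yields the stated formula. The main technical hurdle is the Fermi-coordinate/McKean computation producing the clean exponential form for the heat-trace contribution; the remaining steps are a standard Gaussian integral and algebraic bookkeeping.
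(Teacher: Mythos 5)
Your proposal is correct and reaches the right formula; structurally it is the same argument as the paper's (unfold the conjugacy-class sum of off-diagonal heat kernels to the cylinder $\langle h_\gamma\rangle\backslash\H$ via the centralizer, insert McKean's formula, integrate in space and then in time), but both integrals are executed by genuinely different means. For the spatial integral the paper works in the upper half-plane with $h_0:z\mapsto e^{\ell}z$ and the strip $\{1\le y<e^{\ell/m}\}$, and evaluates via the Abel transform and its inverse ($\mathcal A\mathcal A^{-1}=\mathrm{Id}$); you work in Fermi coordinates, where the substitution $v=\sinh r$ reduces everything to $\int dv/\sqrt{(A-1)-v^2}=\pi$ --- more elementary and, I checked, it reproduces the paper's intermediate expression $\frac{\ell_\gamma}{2\sinh(m\ell_\gamma/2)}\,(4\pi t)^{-1/2}e^{-t/4-(m\ell_\gamma)^2/(4t)}$ exactly. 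For the time integral the paper differentiates an error-function antiderivative, whereas you invoke the closed-form identity $\int_0^\infty t^{-3/2}e^{-at-b/t}\,\d t=\sqrt{\pi/b}\,e^{-2\sqrt{ab}}$, which is cleaner and valid down to $a=0$, i.e.\ $\kappa=-\tfrac14$. The one place where you move faster than is fully justified is the very first step: the identity equating the loop-measure mass of $\mathcal C_X(\gamma^m)$ with $\int_0^\infty\frac{e^{-\kappa t}}{t}\sum_{h\in[h_\gamma^m]}\int_F p_\H(t,z,hz)\,\d\rho(z)\,\d t$ requires lifting Brownian paths to $\H$ and matching free homotopy classes of loops with conjugacy classes in $\Gamma$ via the disintegration $\W_x^t=\int\W^t_{x\to y}\,\mathrm{dvol}(y)$; the paper devotes its entire first step to this, and your write-up should at least record why the lift is well defined and why the endpoint condition $\tilde W_t\in D_{\varepsilon,\H}(h\cdot z)$ for some $h\in[h_\gamma^m]$ characterizes the homotopy class. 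This is standard but not free.
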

Lemma \ref{thm:each} generalizes \cite[Lemma 1.1]{BLM} 
which considered the $\kappa=0$ case. 
The proof of Lemma \ref{thm:each} presented here is an adaptation of that of \cite[Lemma 3.2]{BLM}, where Brownian motion without killing was considered.

\begin{proof}[Proof of Lemma \ref{thm:each}]
{\bf 1.}
Let $\gamma$ be an oriented closed geodesic on $X$ of length $\ell_\gamma$ and iteration number $m$. Notice that for all $x\in X$ and $t>0$, the following decomposition holds:
\begin{equation*}
    \W_{x\to x}^t(X)(\mathcal{C}_X(\gamma))=\sum_{h\in[\gamma]}|\W_{z\to h\cdot z}^t(\H)|
\end{equation*}
where $[\gamma]$ is the conjugacy class associated with $\gamma$ in $\Gamma$, $z\in \H$ is a lift of $x$, and $|\W_{z\to h\cdot z}^t(\H)|$ is the total mass of $\W_{z\to h\cdot z}^t(\H)$.

Indeed, fixing $\epsilon_0>0$ small enough, applying Lemma \ref{lemma:disintegrate}  for $\epsilon>0$ to the set
\[\begin{split} 
    \left\{ W\in C([0,t];X) \mid \ W_0=x, \ W_t\in D_{\epsilon,X}(x), \ d_X^{\infty}(W,\mathcal C_X(\gamma))<\epsilon_0 \right\}, 
\end{split}\]
where $d^{\infty}_X(W,\mathcal C_X(\gamma))<\epsilon$ means that there exists $\eta\in C([0,t];X)$ such that $\eta_0=\eta_t=x$, $[\eta]\in \mathcal C_X(\gamma)$ and $d^{\infty}_X(W,\eta)<\epsilon$. We thus find
\[\begin{split}
    & \mathbb W_{x\to x}^t(X)(\mathcal C_X(\gamma)) \\
    & = \lim_{\epsilon\to 0+}\frac{ \mathbb W_x^t(X)( \{ W\in C([0,t];X) \mid W_0=x, \ W_t\in D_{\epsilon, X}(x), \ d^{\infty}_X(W,\mathcal C_X(\gamma))<\epsilon_0 \} ) }{\textup{vol}(D_{\epsilon,X}(x))}.
\end{split}  \]
Let $z$ be a lift of $x$ in $\H$. Since Brownian motion only depends on the local geometry of the surface, its paths can be lifted to the universal cover. 
Let $\tilde W\in C([0,t];\H)$, with $\tilde W_0 =z$, be the lift of $W\in C([0,t];X)$, with $W_0=x$. Then, for $\epsilon$ sufficiently small,
\[\begin{split} 
W_t\in D_{\epsilon,X}(x), \ d^{\infty}_X(W, \mathcal C_X(\gamma))<\epsilon_0 
\ \Leftrightarrow \ \exists h\in [\gamma], \ \tilde W_t\in D_{\epsilon,\H}(h\cdot z).
\end{split}\]
Consequently,
\begin{align*}
&\mathbb{W}^{t}_{x\to x}(X)(\mathcal{C}_X(\gamma))\\
&= \lim_{\varepsilon\to 0+}
\sum_{h\in[\gamma]}
\frac{
\mathbb{W}^{t}_{z}(\mathbb{H})\bigl(\{ \tilde W \in C([0,t];\mathbb{H})\mid \tilde W_0=z, \ \tilde W_t \in D_{\varepsilon,\mathbb{H}}(h\cdot z)\}\bigr)
}{
\operatorname{vol}(D_{\varepsilon,\mathbb{H}}(h\cdot z))
}\\
& = \sum_{h\in[\gamma]} \bigl|\mathbb{W}^{t}_{z\to h\cdot z}(\mathbb{H})\bigr|.
\end{align*}

{\bf 2.}
Furthermore, if $p_\H(t;z,w):(0,\infty)\times\H\times\H\to \R$ denotes the heat kernel on the hyperbolic plane, then by Lemma \ref{lemma:disintegrate} 
\begin{align*}
    |\mathbb{W}_{z\rightarrow w}^t(\mathbb{H})|&=p_\mathbb{H}(t, z,w)=\frac{\sqrt{2}}{(4 \pi t)^{3/2}} e^{-t/4} \int_{d(z,w)}^{\infty} \frac{r e^{-r^2/(4t)}}{\sqrt{\cosh r - \cosh d(z,w)}} \, \mathrm{d}r
\end{align*}
where $d(z,w)$ is the hyperbolic distance between $z$ and $w$, see \cite[\S 2.1]{Borth}. Essentially, $p_\H(t,z,w)$ is the probability that a Brownian motion starting at $z$ is at $w$ at time $t$.

{\bf 3.}
Using the definition of $\mu^\kappa_X$, introduced at the beginning of \S \ref{blm}, we can express the mass of $\mathcal C_X(\gamma)$ under $\mu^\kappa_X$ in terms of the heat kernel $p_{\H}$:
$$\mu^\kappa_X(\mathcal C_X(\gamma)) = \int _0^\infty \frac{e^{-\kappa t}}{t}\sum_{h\in[\gamma]}\int_F p_\H(t,z,h\cdot z)\,\mathrm{d}\rho(z)\mathrm{d}t.
$$
Here, $F$ is a fundamental domain of $X$, and $\mathrm{d}\rho(z)$ denotes the hyperbolic area measure on $\H$. We now proceed as in the proof of the Selberg trace formula (see \cite{bergeron,Borth, BLM}).

Fix $h_0\in[\gamma]$. There exists a unique primitive element $\alpha\in \Gamma$ such that $h_0=\alpha^m$ and $m>0$. The centralizer of $h_0$ in $\Gamma$ is given by $\langle \alpha \rangle$. In particular, $[\gamma] = \{h_1^{-1}h_0h_1 \mid [h_1]\in \langle \alpha \rangle\backslash \Gamma \}$. Hence
\[\begin{split} 
    \sum_{h\in[\gamma]}\int_F p_\H(t,z,h\cdot z)\,\mathrm{d}\rho(z) & =\sum_{[h_1]\in\langle\alpha\rangle\backslash\Gamma}\int_{F}p_\H(t,z, h_1^{-1}h_0h_1\cdot z)\,\mathrm{d}\rho(z) \\
    & = \sum_{[h_1]\in \langle \alpha\rangle\backslash \Gamma} \int_F p_{\H}(t, h_1\cdot z, h_0\cdot(h_1\cdot z)) \,\mathrm{d}\rho(z)\\
    & = \sum_{[h_1]\in \langle \alpha\rangle\backslash \Gamma}\int_{h_1\cdot F} p_{\H}(t,z,h_0\cdot z)\,\mathrm{d}\rho(z).
\end{split}\]
Here, we used the fact that both the heat kernel and the area measure are invariant under isometries.

Up to conjugation, we may assume $h_0$ is given by $h_0:z\mapsto e^{\ell_\gamma}z$. Any point in $\H$ can be pulled back by an element of the group $\langle \alpha \rangle$ to a unique point in the strip $F_\alpha \coloneqq \{x+iy \mid 1\leq y<e^{\ell_\gamma/m}\}$. It implies that
\begin{align*}
    & \sum_{h\in[\gamma]}\int_F p_\H(t,z,h\cdot z)\,\mathrm{d}\rho(z)
    =\int_{F_\alpha}p_\H(t,z,e^{\ell_\gamma}z)\,\mathrm{d}\rho(z)\\
    &\qquad \qquad =\frac{\sqrt{2} e^{-t/4}}{(4 \pi t)^{3/2}} \int_1^{e^{\ell_\gamma/m}} \int_{\mathbb{R}} \int_{d(z,e^{\ell_\gamma} z)}^{\infty}\frac{r e^{-r^2 / 4t} \, dr}{\sqrt{\cosh r - \cosh d(z,e^{\ell_\gamma} z)}} \,\frac{\mathrm dx \, \mathrm dy}{y^2},
\end{align*}
where $z=x+iy$, and the hyperbolic distance $d$ verifies
\[ \cosh d(z,e^{\ell_\gamma}z)=1 + 2\left(\sinh \frac{\ell_\gamma}{2}\right)^{\! 2} \left(1 + \frac{x^{2}}{y^{2}}\right). \]
We thus compute 
\[ \sum_{h\in [\gamma]}\int_F p_{\H}(t,z,h\cdot z)\,\mathrm d\rho(z) = - \frac{\sqrt{2}e^{-\frac{t}{4}}}{(4\pi t)^{\frac32}}\cdot\frac{2t\ell_\gamma}{m} \int_\R\int_{\cosh^{-1}(q(\theta))}^\infty \frac{\mathrm{d}(e^{-\frac{r^2}{4t}})}{\sqrt{\cosh{r} - q(\theta)}}\mathrm \,\mathrm d\theta, \]
where we denote $q(\theta)\coloneqq 1+2(\sinh(\frac{\ell_\gamma}{2}))^2(1+\theta^2)$, and $\cosh^{-1}$ is the inverse function of $\cosh: (1,+\infty)\to (\cosh(1),+\infty)$. 

{\bf 4. } To proceed with the computation, we introduce the Abel transform $\mathcal A$ and its inverse transform (see \cite[Lemma 3.10]{bergeron}):
\[ \mathcal Au(\rho)\coloneqq 2\int_{\rho}^\infty u(r)\frac{r \,\mathrm d r}{\sqrt{r^2-\rho^2}}, \ \ \mathcal A^{-1}v(r)\coloneqq -\frac{1}{\pi}\int_r^\infty \frac{\mathrm d v(\rho)}{\sqrt{\rho^2-r^2}}. \]
We then compute:
\[\begin{split} 
    & \int_\R\int_{\cosh^{-1}(q(\theta))}^\infty \frac{\mathrm{d}(e^{-\frac{r^2}{4t}})}{\sqrt{\cosh{r} - q(\theta)}} \,\mathrm d\theta \\
    & \qquad \qquad = \int_\R\int_{\sqrt{q(\theta)}}^\infty \frac{ \mathrm{d}( e^{ -\frac{ (\cosh^{-1}(\rho^2))^2 }{4t} } ) }{ \sqrt{ \rho^2 - q(\theta) } }\,\mathrm d\theta \\
    & \qquad \qquad = -\pi\int_\R \mathcal A^{-1}\left( e^{ -\frac{ (\cosh^{-1}(\bullet^2))^2 }{4t} } \right)\left(\sqrt{q(\theta)}\right) \,\mathrm d\theta \\
    &  \qquad \qquad = -\frac{\pi}{\sqrt{2}\sinh(\frac{\ell_\gamma}{2})}\int_{\sqrt{\cosh{\ell_\gamma}}}^\infty \mathcal A^{-1}\left( e^{ -\frac{ (\cosh^{-1}(\bullet^2))^2 }{4t} } \right)(r)\frac{ r\mathrm d r }{\sqrt{r^2-\cosh{\ell_\gamma}}} \\
    & \qquad \qquad = -\frac{\pi}{2\sqrt{2}\sinh(\frac{\ell_\gamma}{2})}\mathcal A \mathcal A^{-1}\left( e^{ -\frac{ (\cosh^{-1}(\bullet^2))^2 }{4t} } \right)\left(\sqrt{\cosh{\ell_\gamma}}\right) \\
    &  \qquad \qquad= -\frac{\pi}{2\sqrt{2}\sinh(\frac{\ell_\gamma}{2})} e^{-\frac{\ell_\gamma^2}{4t}}.
\end{split}\]
We obtain:
$$\sum_{h\in[\gamma]}\int_F p_\H(t, z,h\cdot z)\,\mathrm{d}\rho(z)=\frac{1}{4\sqrt{\pi t}}e^{-\frac{t}{4}}e^{-\frac{\ell_\gamma^2}{4t}}\frac{\ell_\gamma}{m\sinh \frac{\ell_\gamma}{2}}.$$
Hence, we conclude:
\begin{align*}
    \mu_X^\kappa(\mathcal{C}_X(\gamma)) &= \int_0^\infty \frac{e^{-\frac{t}{4}}}{\sqrt{4\pi t}}  \frac{\ell_\gamma \, e^{-\frac{\ell_\gamma^2}{4t}}}{2m \sinh\left(\frac{\ell_\gamma}{2}\right)}  e^{-\kappa t}  \,\frac{\mathrm{d}t}{t} = \frac{1}{m}\frac{e^{\left(\frac{1}{2}-\sqrt{\frac{1}{4}+\kappa}\right)\ell_\gamma}}{e^{\ell_\gamma}-1},
\end{align*}
where the integral is computed using the formula
\[ t^{-\frac32}e^{-\frac{\alpha^2}{t}-\beta^2 t} =  \ -\frac{\pi}{2\alpha}\frac{\mathrm{d}}{\mathrm d t}\left( e^{2\alpha\beta} \mathrm{Erf}\left(\frac{\alpha+\beta t}{\sqrt t}\right) + e^{-2\alpha\beta} \mathrm{Erf}\left( \frac{\alpha -\beta t}{\sqrt t} \right) \right), \]
which holds for constants $\alpha$, $\beta>0$, and the error function $\mathrm{Erf}(w)\coloneqq \frac{2}{\sqrt \pi}\int_0^w e^{-t^2}\,\mathrm d t$.
\end{proof}

\Remarks
1. The formula in Lemma \ref{thm:each} remains valid for any complex parameter $\kappa$ such that $\mathrm{Re}\,\kappa \geq -1/4$. However, Brownian motion with a complex killing rate lacks a probabilistic interpretation. While many of the subsequent results continue to hold from the perspective of measure theory, we shall not pursue this generalization here.

\noindent 2. Notice that the Brownian loop measure with killing is not necessarily conformally invariant. In fact, this is the case if and only if $\kappa\neq0$, i.e., without killing.

Theorem \ref{thm:zeta} is a direct consequence of Lemma \ref{thm:each}.
\begin{proof}[Proof of Theorem \ref{thm:zeta}]
For $\text{Re}\,s>\delta$, using the Taylor expansion of the logarithm, one finds
$$-\log Z_X(s)=\sum_{\gamma\in \mathcal{P}_X}\sum_{m=1}^\infty \frac{1}{m}\frac{1}{e^{sm\ell_\gamma}-e^{(s-1)m\ell_\gamma}}.$$
In particular, for $\kappa>0$ and $s=\frac12+\sqrt{\frac14+\kappa}$, we have 
\begin{align*}
    -\log Z_X\left(\frac12+\sqrt{\frac14+\kappa}\right) 
    &= \sum_{\gamma \in \mathcal{P}_X} \sum_{m=1}^\infty \frac{1}{m} \cdot 
    \frac{1}{e^{\left(\frac{1}{2} + \sqrt{\frac{1}{4} +\kappa} \right) m \ell_\gamma} 
    - e^{-\left(\frac{1}{2} - \sqrt{\frac{1}{4} + \kappa} \right) m \ell_\gamma}} \\
    &= \sum_{\gamma \in \mathcal{P}_X} \sum_{m=1}^\infty 
    \frac{1}{m} \cdot \frac{e^{ \left(\frac{1}{2} - \sqrt{\frac{1}{4} + \kappa} \right) m \ell_\gamma}}{e^{m \ell_\gamma} - 1}.
\end{align*}
The calculations above work formally for any real killing rate on any complete hyperbolic surface without boundary. The formula therefore holds as long as convergence is guaranteed. 
Lemma \ref{convergence zeta function} shows this is the case as long as $s=\frac12+\sqrt{\frac14+\kappa}>\delta$.
\end{proof}

\section{Trace and determinant}\label{results}

\subsection{Brownian loop measure and regularized heat trace}\label{subsection:trace_reg}

Let $X=\Gamma\backslash \H$ be a complete hyperbolic surface. Let $\Delta$ be the (positive) Laplace-Beltrami operator on $X$.
The heat operator $e^{-t\Delta}$ is not necessarily of trace class. 
We thus introduce the following renormalization of the heat kernel.

\begin{definition}\label{definition:trace}
    Suppose $X$ is as above. Let $(c_1,\dots,c_{n_{c}})$ denote the cusps of $X$, and $[c_l]$ the homotopy class of the $l$'th cusp. We define $E_{X,\kappa}$ to be the operator whose Schwartz kernel is the heat kernel on $X$ with killing rate $\kappa \geq -1/4$, from which we subtracted the heat kernel on the plane and the parabolic terms. That is,
\begin{align*}
    E_{X,\kappa}(t, z,z')& \coloneqq \sum_{h\in\Gamma\backslash\{\mathrm{Id}\}} p_\H^\kappa(t, z,h\cdot z' )-\sum_{l=1}^{n_c}\sum_{h\in[c_l]}p_\H^\kappa (t,z,h\cdot  z')
\end{align*}
for $(z,z')\in X\times X$, which are identified with their lifts in $F\times F$, where $F$ is a fixed fundamental domain of $X$.
\end{definition}

\Remark
In the case where the surface does not have cusps, $E_{X,\kappa}$ is the heat kernel from which we subtracted the heat kernel on the entire hyperbolic plane. If $X$ has infinite area, this term is responsible for the divergence of the trace. It is, therefore, a natural regularization. A similar operation has been used by D. Borthwick in \cite[\S 3.3]{Borth} to regularize the resolvent operator. 

By definition, we have the following identity:
\begin{align*}
    E_{X,\kappa}(t,z,z')
    &=\sum_{\gamma\in\mathcal{P}_X} \sum_{h \in[\gamma]} \sum_{m=1}^\infty p_\H^\kappa (t,z,h^m \cdot z'), \ (z,z')\in F\times F.
\end{align*}

One can compute a Selberg-type trace formula for $E_{X,\kappa}(t)$.
\begin{proposition}\label{proposition:trace}
    Let $\kappa \geq -1/4$. For any $t>0$, the operator $E_{X,\kappa}(t)$ is trace class, and the following trace formula holds:
    \begin{align*}
        \mathrm{Tr}\,E_{X,\kappa}(t)&=\frac{e^{-t/4-\kappa t}}{4\sqrt{\pi t}}
        \sum_{\gamma \in \mathcal{P}_X}\sum_{k=1}^\infty \ell_\gamma \frac{e^{-\frac{(k\ell_\gamma)^2}{4t}}}{\sinh (\frac{k\ell_\gamma}{2})},
    \end{align*}
where $\mathcal{P}_X$ is the set of closed oriented primitive geodesics of $X$. 
\end{proposition}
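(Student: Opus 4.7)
The plan is to derive Proposition~\ref{proposition:trace} as a direct consequence of (the proof of) Lemma~\ref{thm:each}. The decomposition of the diagonal kernel
\[ E_{X,\kappa}(t,z,z) = \sum_{\gamma\in\mathcal P_X}\sum_{m=1}^\infty\sum_{h\in[\gamma^m]}p_\H^\kappa(t,z,h\cdot z), \]
already displayed in the text, is non-negative (the subtraction in the definition removes exactly the identity and parabolic contributions, leaving only hyperbolic terms) and symmetric in $(z,z')$ because $p_\H^\kappa$ is. Hence $E_{X,\kappa}(t)$ is a positive self-adjoint integral operator with continuous kernel, and once the diagonal integral is shown to be finite, a standard Mercer-type argument gives $\mathrm{Tr}\,E_{X,\kappa}(t) = \int_F E_{X,\kappa}(t,z,z)\,\mathrm d\rho(z)$ and trace class membership simultaneously.

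By Tonelli applied to this non-negative triple sum, I can interchange sum and integral. For each fixed primitive $\gamma \in \mathcal P_X$ and each $m \geq 1$, the inner piece
\[ \sum_{h\in[\gamma^m]}\int_F p_\H^\kappa(t,z,h\cdot z)\,\mathrm d\rho(z) = e^{-\kappa t}\sum_{h\in[\gamma^m]}\int_F p_\H(t,z,h\cdot z)\,\mathrm d\rho(z) \]
is exactly the quantity computed in Steps~3--4 of the proof of Lemma~\ref{thm:each}, applied to the closed geodesic $\gamma^m$ (length $m\ell_\gamma$, iteration number $m$). That calculation evaluates to
\[ \frac{e^{-t/4-\kappa t}}{4\sqrt{\pi t}}\cdot\frac{\ell_\gamma\,e^{-(m\ell_\gamma)^2/(4t)}}{\sinh(m\ell_\gamma/2)}, \]
and summing over $\gamma\in\mathcal P_X$ and $m\in\mathbb N^*$ yields the stated trace formula verbatim. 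No new computation is required beyond inserting the factor $e^{-\kappa t}$ throughout.

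The one genuine point requiring care is the convergence that underpins both the Tonelli exchange and the trace class property. I would handle this by the elementary bound $e^{-(m\ell_\gamma)^2/(4t)}\leq e^{tA^2}\,e^{-Am\ell_\gamma}$, valid for any $A>0$ (from $(m\ell_\gamma-2tA)^2\geq 0$), choosing any $A>\delta$. A Poincar\'e-type estimate exactly as in the proof of Lemma~\ref{convergence zeta function} then bounds $\sum_{\gamma\in\mathcal P_X}\ell_\gamma\,e^{-Am\ell_\gamma}$ uniformly in $m$, while $1/\sinh(m\ell_\gamma/2)$ decays exponentially in $m$, so the double series converges absolutely and locally uniformly in $t>0$. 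This furnishes the finiteness needed to legitimize the interchange, to identify the trace with the diagonal integral, and to conclude that $E_{X,\kappa}(t)$ is trace class.
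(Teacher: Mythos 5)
Your proposal takes essentially the same route as the paper: the paper's proof consists of integrating the kernel of $E_{X,\kappa}(t)$ over the diagonal and regrouping the elements of $\Gamma$ by conjugacy class as in the classical Selberg trace formula, which is precisely the per-class computation from Steps 3--4 of the proof of Lemma~\ref{thm:each} that you invoke (applied to $\gamma^m$, with the factor $e^{-\kappa t}$ carried along), and your Gaussian-domination bound $e^{-(m\ell_\gamma)^2/(4t)}\leq e^{tA^2}e^{-Am\ell_\gamma}$ supplies the convergence that the paper leaves implicit. One caveat: pointwise non-negativity of the kernel does not make $E_{X,\kappa}(t)$ a positive semi-definite \emph{operator}, so the ``Mercer-type argument'' for trace-class membership does not apply verbatim as you state it; however, the paper's own justification (an appeal to monotone convergence) is no more detailed on this point, and your convergence estimate is exactly what is needed to make the diagonal-integral identity rigorous.
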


\Remark
An alternative way to regularize the heat trace is to use the {\em zero heat trace} $\ztr(e^{-t\Delta})$; see \cite[\S 13.3]{Borth}. The heat trace introduced in the present paper is linked to the zero heat trace through the following identity, according to \cite[Lemma 13.13]{Borth}
\[ \ztr(e^{-t(\Delta+\kappa)}) = \mathrm{Tr}E_{X,\kappa}(t) + \textup{0-vol}(X)h_{0,\kappa} + n_c h_{c,\kappa}(t) \]
where $\textup{0-vol}(X)$ is the {\em zero volume} of $X$ and $h_{0,\kappa}$, $h_{c,\kappa}$ are functions defined by 
\[\begin{split} 
h_{0,\kappa}(t) & \coloneqq \frac{e^{-(\frac{1}{4}+\kappa) t}}{(4\pi t)^{\frac32}}\int_0^\infty \frac{ r e^{-\frac{r^2}{4t}} }{\sinh(\frac{r}{2})}\,\mathrm d r, \\ 
h_{c,\kappa}(t) & \coloneqq \left(-\frac{1}{2\pi}\int_{\R}\frac{ \Gamma'(1+iu) }{\Gamma(1+iu)} e^{-t u^2}\,\mathrm d u - \frac{ \log 2 }{\sqrt{4\pi t}} + \frac14\right) e^{-(\frac{1}{4}+\kappa)t}. 
\end{split}\]

\begin{proof}[Proof of Proposition \ref{proposition:trace}]
    By the monotone convergence theorem, Tr$\,E_{X,\kappa}(t)$ is obtained by integrating its kernel on the diagonal:
\begin{equation*}
    \text{Tr}\,E_{X,\kappa}(t)=\int_{X}E_{X,\kappa}(t,z,z)\,\mathrm d\rho(z),
\end{equation*}
where $d\rho$ is the hyperbolic volume form on the surface $X$. One obtains the desired result upon regrouping the elements of $\Gamma$ by conjugacy class, as is classically done in the proof of the Selberg trace formula (see \cite{bergeron} or \cite{Guillopé1986} for example).
\end{proof}

The operators $E_{X,\kappa}(t)$ are linked to the Brownian loop measure through the following proposition.

\begin{proposition}\label{link-trace-blm}
    Let $X$ be a complete and geometrically finite hyperbolic surface without boundary. Let $\delta$ be its exponent of convergence. For any $\kappa$ such that $\frac12+\sqrt{\frac14+\kappa}>\delta$, the following equality holds:
    \begin{equation*}
        \int_0^\infty \frac{1}{t}\operatorname{Tr}E_{X,\kappa}(t)\,\mathrm{d}t=\sum_{\gamma\in\mathcal{P}_X}\sum_{m=1}^\infty \mu_X^\kappa\big(\mathcal{C}_X(\gamma^m)\big).
    \end{equation*}
\end{proposition}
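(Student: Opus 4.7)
The plan is to combine the Selberg-type trace formula from Proposition~\ref{proposition:trace} with the closed-form expression for $\mu_X^\kappa(\mathcal C_X(\gamma^m))$ from Lemma~\ref{thm:each} via a single explicit $t$-integral. After substituting the formula of Proposition~\ref{proposition:trace} into $\int_0^\infty t^{-1}\,\mathrm{Tr}\,E_{X,\kappa}(t)\,\mathrm d t$, every term of the resulting triple expression (sum over primitive geodesics, sum over iterates, integral over $t$) is manifestly nonnegative, since the hypothesis $\kappa\geq -\tfrac14$ makes $\sqrt{\tfrac14+\kappa}$ real. Tonelli's theorem then allows me to interchange the sum with the integral, reducing the problem to evaluating
$$I_{\gamma,k} \coloneqq \int_0^\infty t^{-3/2} \exp\!\left(-(\tfrac14+\kappa)t - \tfrac{(k\ell_\gamma)^2}{4t}\right)\mathrm d t$$
for each primitive $\gamma \in \mathcal P_X$ and each $k \in \N^*$.

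Next I would compute $I_{\gamma,k}$ by the standard identity
$$\int_0^\infty t^{-3/2}\, e^{-\alpha^2/t - \beta^2 t}\,\mathrm d t = \frac{\sqrt{\pi}}{\alpha}\, e^{-2\alpha\beta}\qquad(\alpha,\beta>0),$$
applied with $\alpha = k\ell_\gamma/2$ and $\beta = \sqrt{\tfrac14+\kappa}$; this is essentially the same Laplace-type integral implicitly used at the end of the proof of Lemma~\ref{thm:each} through the error-function antiderivative. Plugging back in, the prefactor $(4\sqrt{\pi})^{-1}\ell_\gamma / \sinh(k\ell_\gamma/2)$ combines with $I_{\gamma,k}$ to produce
$$\frac{1}{2k\sinh(k\ell_\gamma/2)}\, e^{-k\ell_\gamma\sqrt{1/4+\kappa}} = \frac{1}{k}\cdot\frac{e^{k(1/2-\sqrt{1/4+\kappa})\ell_\gamma}}{e^{k\ell_\gamma}-1},$$
where I use $2\sinh(x/2) = e^{x/2}-e^{-x/2}$. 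Lemma~\ref{thm:each} identifies this quantity as $\mu_X^\kappa(\mathcal C_X(\gamma^k))$, and summing over $(\gamma,k) \in \mathcal P_X \times \N^*$ yields the claimed identity.

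The only nonroutine point is verifying that Tonelli's theorem produces a finite value, i.e., that the double sum $\sum_{\gamma,k}\mu_X^\kappa(\mathcal C_X(\gamma^k))$ converges. This is guaranteed by the assumption $\tfrac12 + \sqrt{\tfrac14+\kappa} > \delta$: by the Taylor expansion of the logarithm applied to the Euler product for $Z_X$, the double sum equals $-\log Z_X\bigl(\tfrac12 + \sqrt{\tfrac14+\kappa}\bigr)$, which is finite by Lemma~\ref{convergence zeta function}. I thus foresee no substantial obstacle beyond this bookkeeping; once the interchange is justified, the proposition reduces to a textbook computation matching the two preceding formulas term by term.
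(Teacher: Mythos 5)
Your proof is correct, but it takes a different route from the paper's. The paper never invokes the explicit trace formula of Proposition~\ref{proposition:trace}: instead it starts from the intermediate representation in step~{\bf 3} of the proof of Lemma~\ref{thm:each}, namely $\mu_X^\kappa(\mathcal C_X(\gamma^m))=\int_0^\infty t^{-1}\sum_{h\in[\gamma^m]}\int_F p_\H^\kappa(t,z,h\cdot z)\,\mathrm d\rho(z)\,\mathrm dt$, and simply observes that the disjoint union of the conjugacy classes $[\gamma^m]$ over all primitive $\gamma$ and $m\ge 1$ exhausts $\Gamma\setminus\{\mathrm{Id}\}$ minus the parabolic classes attached to the cusps --- which is exactly the kernel defining $E_{X,\kappa}$. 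That argument is essentially a one-line regrouping of group elements and makes transparent why Definition~\ref{definition:trace} is the natural regularization. You instead evaluate both sides in closed form --- the trace formula on one side, Lemma~\ref{thm:each} on the other --- and match them term by term via the Laplace-type integral $\int_0^\infty t^{-3/2}e^{-\alpha^2/t-\beta^2 t}\,\mathrm dt=\frac{\sqrt\pi}{\alpha}e^{-2\alpha\beta}$; your use of Tonelli (all summands nonnegative) and your identification of the resulting term with $\mu_X^\kappa(\mathcal C_X(\gamma^k))$ are both correct, and the finiteness via $-\log Z_X(\frac12+\sqrt{\frac14+\kappa})$ is exactly the right justification. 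The cost of your route is that it leans on Proposition~\ref{proposition:trace} (whose proof is only sketched) and repeats a computation already embedded in Lemma~\ref{thm:each}; the benefit is that it doubles as an independent consistency check between the trace formula and the loop-measure computation. One marginal point: your integral identity is stated for $\beta>0$, so the boundary case $\kappa=-\tfrac14$ (admissible when $\delta<\tfrac12$) should be noted to follow by monotone convergence as $\beta\to 0^+$.
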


\begin{proof}
Let $(c_1,\dots,c_{n_c})$ denote the cusps of $X$, and $[c_l]$ the homotopy class of the $l$'th cusp. For any $\kappa$ such that $\frac12+\sqrt{\frac14+\kappa}>\delta$, by part {\bf 3} of the proof of Theorem \ref{thm:zeta},
\begin{align*}
            & \sum_{\gamma\in\mathcal{P}_X}\sum_{m=1}^\infty \mu_X^\kappa\big(\mathcal{C}_X(\gamma^m)\big)\\
            &\qquad =\sum_{\gamma\in\mathcal{P}_X}\sum_{m=1}^\infty\int _0^\infty \frac{1}{t}\sum_{h\in[\gamma]}\int_F p_\H^\kappa(t,z,h\cdot z)\,\mathrm{d}\rho(z)\mathrm{d}t.\\
            & \qquad=\int_0^\infty\frac{1}{t}\int_F \left(\sum_{h\in\Gamma\backslash\{\mathrm{Id}\}} p_\H^\kappa(t,z,h\cdot z)-\sum_{l=1}^{n_c}\sum_{h\in[c_l]}p_\H(t,z,h \cdot z)\right)\,\mathrm{d}\rho(z)\d t\\
            &\qquad =\int_0^\infty \frac{1}{t}\operatorname{Tr}E_{X,\kappa}(t)\,\mathrm{d}t.
        \end{align*}
The condition on $\kappa$ ensures that all the considered quantities are finite.
\end{proof}

\subsection{Brownian loop measure and regularized determinant of the Laplacian}

\subsubsection{The Compact Case}\label{compact case of asymptotic expansion}

In this section, we assume $X$ is a compact hyperbolic surface without boundary. The spectrum of the Laplace--Beltrami operator $\Delta$ on $X$ is an increasing discrete sequence
\[ 0 = \lambda_0 < \lambda_1 \le \lambda_2 \le \cdots .\]
The zero eigenvalue $\lambda_0$ has multiplicity one, and the corresponding eigenfunctions are the constant functions. Unfortunately, the quantity ``$\det \Delta =\prod_{k=1}^\infty \log \lambda_k$'' does not converge.
In order to overcome this difficulty in defining the determinant of the Laplacian, we use zeta-regularization, which is defined by taking
\[ \log\det\nolimits_{\zeta}\Delta \coloneqq -\zeta_X'(0) \]
where for $\operatorname{Re}(s) > 1$,
\[ \zeta_X(s) \coloneqq \frac{1}{\Gamma(s)} \int_0^\infty t^{\,s-1} 
\sum_{k=1}^\infty e^{-t\lambda_k}\, \mathrm dt, \]
which has a meromorphic extension to $\mathbb{C}$ and is analytic at $s=0$.

We now prove of item~\ref{item:compact_expansion} of Theorem~\ref{thm:det}.
We remark that according to Theorem~\ref{thm:zeta}, the left-hand side of the asymptotic formula in \ref{item:compact_expansion} of Theorem~\ref{thm:det} 
is also $-\log Z_X\left(\frac12+\sqrt{\frac14 +\kappa}\right)$. Consequently, 
\ref{item:compact_expansion} of Theorem~\ref{thm:det} can be considered as a corollary of the following identity
\[ {\det}_{\zeta}\left( \Delta_X+s(s-1) \right) = Z_X(s)\left( e^{E-s(s-1)}(2\pi)^s \frac{\Gamma_2(s)^2}{\Gamma(s)} \right)^{-\chi(X)}, \]
originally proved by Sarnak \cite[Theorem 1]{sarnak}. Here we give a direct proof for the asymptotic expansion.

\begin{proof}[Proof of \ref{item:compact_expansion} in Theorem \ref{thm:det}]
We first recall the following identity, which appears for instance in \cite[(3)]{naud} by F. Naud and is a consequence of the Selberg trace formula,
\begin{equation*}
    -\log\mathrm{det}_\zeta\Delta=-\mathrm{Area}(X)E-\gamma_{\textup{EM}}+\int_0^1\frac{S_X(t)}{t}\,\mathrm{d}t+\int_1^\infty\frac{S_X(t)-1}{t}\,\mathrm{d}t
\end{equation*}
where $S_X(t)$ is given by the formula
$$S_X(t)=\sum_{\gamma \in \mathcal{P}_X}\sum_{m=1}^\infty \frac{e^{-t/4}}{(4\pi t)^{1/2}}\frac{\ell_\gamma}{2\sinh(m\ell_\gamma/2)}e^{-\frac{(m\ell_\gamma)^2}{4t}}.$$
We recognize that $S_X(t)=e^{-\kappa t}\mathrm{Tr}E_{X,\kappa}(t)$ where $\mathrm{Tr}E_{X,\kappa}(t)$ is the renormalized heat trace introduced in Proposition \ref{link-trace-blm}.
In our context, it is important to note that $|S_X(t)-1|$ is exponentially small as $t\to +\infty$ and $S_X(t)$ is exponentially small as $t\to 0^+$, see \cite[\S 2]{naud}.

{\bf 1.}
We start by noticing that, according to Proposition \ref{link-trace-blm},
\[ \int_0^\infty e^{-\kappa t}\frac{S_X(t)}{t}\,\d t = \sum_{\gamma \in\mathcal{P}_X}\sum_{m=1}^\infty\mu_X^\kappa(\mathcal{C}_X(\gamma^m)).
\]

{\bf 2.} As $\kappa\to 0+$, since \(|(1-e^{-\kappa t}) S_X(t)/t|\leq C\kappa |S_X(t)|\) and $|S_X(t)|$ is integrable on $[0,1]$, we have 
\[ \int_0^1(1-e^{-\kappa t})\frac{S_X(t)}{t}\,\mathrm{d}t=O(\kappa). \]

{\bf 3.} 
The formula \cite[8.212-1]{gradshteyn2007} gives, as $\kappa\to 0^+$,
$$\int_1^\infty \frac{e^{-\kappa t}}{t}\,\d t = \int_\kappa^\infty t^{-1}e^{-t}\,\mathrm d t=  -\gamma_{\textup{EM}} - \log \kappa + O(\kappa).$$

{\bf 4. } Lastly, notice $|1-e^{-\kappa t}|\leq \kappa t$ for $t\in (1,-\log\kappa)$, hence 
\[ \int_1^{-\log\kappa}(1-e^{-\kappa t})\frac{S_X(t)-1}{t}\,\mathrm{d}t= O(\kappa(\log \kappa)^2). \]
On the other hand, since $|S_X(t)-1|$ decays exponentially as $t\to +\infty$, we may assume $|S_X(t)-1|=O(e^{-\beta t})$ for certain $\beta>0$. Then 
as $\kappa\to 0+$
\[ \int_{-\log\kappa}^\infty(1-e^{-\kappa t})\frac{S_X(t)-1}{t}\,\mathrm{d}t= O(e^{\beta\log \kappa}). \]

Collecting all integrals in {\bf 1}-{\bf 4} yields the desired asymptotics.
\end{proof}

For comparison, we mention that M. Ang et al. obtain a similar result in  \cite{AngPark}, for general closed Riemannian surfaces.
\begin{proposition}[{\cite[Proposition 1.4]{AngPark}}]\label{ang park asymptotic}
Let $(M,g)$ be a closed, smooth Riemannian surface. The mass of loops on $M$ under $\mu_M^\kappa$ with quadratic variation larger than $4\varpi>0$ is equal to:
\[
\frac{\mathrm{vol}_g(M)}{4\pi \varpi}
 - \frac{\chi(M)}{6}
 \left(\log \varpi + \gamma_{\textup{EM}}\right)
 - \log \kappa
 - \log {\det}_{\zeta} \Delta_g
 + \epsilon(\kappa,\varpi),
\]
where $\epsilon(\kappa,\varpi)$ tends to zero in the limit as first $\kappa$ and then $\varpi$ are sent to zero.
\end{proposition}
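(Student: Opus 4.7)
The plan is to start from the definition
\[ \mu_M^{\kappa}\bigl(\textup{loops with QV} \geq 4\varpi\bigr) = \int_\varpi^\infty e^{-\kappa t}\,\theta(t)\,\frac{\mathrm{d}t}{t}, \qquad \theta(t) \coloneqq \mathrm{Tr}(e^{-t\Delta_g}), \]
and to isolate the zero eigenvalue of $\Delta_g$ by writing $\theta(t) = 1 + (\theta(t) - 1)$. The constant contribution is the exponential integral $E_1(\kappa\varpi)$; its small-argument expansion $E_1(x) = -\gamma_{\textup{EM}} - \log x + O(x)$ immediately produces the $-\log\kappa$ term together with $-\log\varpi - \gamma_{\textup{EM}}$, up to an $O(\kappa\varpi)$ error that vanishes once $\kappa \to 0^+$.

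For the remaining integral $\int_\varpi^\infty e^{-\kappa t}(\theta(t) - 1)\,\mathrm{d}t/t$, I would first send $\kappa \to 0^+$ with $\varpi$ fixed: since $\theta(t) - 1 = \sum_{k \geq 1} e^{-t\lambda_k}$ decays exponentially at infinity, dominated convergence gives the limit $\int_\varpi^\infty (\theta(t) - 1)\, \mathrm{d}t/t$. Next I would analyze this integral as $\varpi \to 0^+$ via the Minakshisundaram--Pleijel expansion $\theta(t) = \mathrm{vol}_g(M)/(4\pi t) + \chi(M)/6 + O(t)$, valid on any closed Riemannian surface, where the $\chi(M)/6$ coefficient comes from Gauss--Bonnet. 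Extracting the $1/t^2$ and $1/t$ singularities from $(\theta(t) - 1)/t$ yields the divergent terms $\mathrm{vol}_g(M)/(4\pi\varpi)$ and $-(\chi(M)/6 - 1)\log\varpi$; combining the latter with the $-\log\varpi$ collected from the zero-mode step produces exactly the required coefficient $-\chi(M)/6$ in the final asymptotic.

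The last step is to identify the $(\kappa,\varpi)$-independent constant with $-\log{\det}_\zeta\Delta_g$. For this I would compute $\zeta_M'(0)$ directly from
\[ \zeta_M(s) = \frac{1}{\Gamma(s)}\int_0^\infty t^{s-1}(\theta(t) - 1)\,\mathrm{d}t, \]
splitting at $t=1$ and subtracting the first two heat-kernel coefficients from $\theta(t) - 1$ on $[0,1]$ to expose explicit simple poles $\mathrm{vol}_g(M)/(4\pi(s-1))$ and $(\chi(M)/6 - 1)/s$ inside the bracket. Combined with the Laurent expansion $1/\Gamma(s) = s + \gamma_{\textup{EM}} s^2 + O(s^3)$, this yields a closed-form expression for $\zeta_M'(0)$ in terms of the two renormalized integrals $\int_0^1 t^{-1}[\theta(t) - 1 - \mathrm{vol}_g(M)/(4\pi t) - (\chi(M)/6 - 1)]\,\mathrm{d}t$ and $\int_1^\infty t^{-1}(\theta(t) - 1)\,\mathrm{d}t$. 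The main obstacle is the careful bookkeeping of the $\gamma_{\textup{EM}}$ contributions: the $1/s$ pole must interact with the $\gamma_{\textup{EM}} s^2$ term of $1/\Gamma(s)$ to produce exactly the shift needed so that $\zeta_M'(0)$ cancels the constants collected in the first two steps, leaving precisely $-\frac{\chi(M)}{6}\gamma_{\textup{EM}} - \log{\det}_\zeta\Delta_g$ as claimed.
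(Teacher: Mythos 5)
Your proposal is correct, but be aware that the paper does not prove this proposition at all: it is quoted verbatim from Ang--Park--Pfeffer--Sheffield \cite[Proposition 1.4]{AngPark}, so there is no internal proof to compare against. Your derivation is a legitimate self-contained one along the standard heat-trace/Mellin-transform lines, and the delicate bookkeeping you flag does close up: writing $\theta(t)-1=\frac{\mathrm{vol}_g(M)}{4\pi t}+(\frac{\chi(M)}{6}-1)+R(t)$ with $R(t)=O(t)$ gives $\Gamma(s)\zeta_M(s)=\frac{\chi(M)/6-1}{s}+C+O(s)$ near $s=0$, where $C=-\frac{\mathrm{vol}_g(M)}{4\pi}+\int_0^1 t^{-1}R(t)\,\mathrm{d}t+\int_1^\infty t^{-1}(\theta(t)-1)\,\mathrm{d}t$ is exactly the $(\kappa,\varpi)$-independent constant your first two steps leave behind; multiplying by $1/\Gamma(s)=s+\gamma_{\textup{EM}}s^2+O(s^3)$ yields $\zeta_M'(0)=(\frac{\chi(M)}{6}-1)\gamma_{\textup{EM}}+C$, so $C=(1-\frac{\chi(M)}{6})\gamma_{\textup{EM}}-\log\det_\zeta\Delta_g$, which combines with the $-\gamma_{\textup{EM}}-\log\varpi$ from $E_1(\kappa\varpi)$ and the $-(\frac{\chi(M)}{6}-1)\log\varpi$ from the short-time singularity to give precisely the stated coefficients. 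The order of limits (first $\kappa\to0^+$ at fixed $\varpi$, then $\varpi\to0^+$) is also respected by your dominated-convergence step. For contrast, the closest argument the paper actually carries out is the proof of item \ref{item:compact_expansion} of Theorem \ref{thm:det} in \S\ref{compact case of asymptotic expansion}: there the surface is compact hyperbolic, the regularization is by free homotopy class rather than by a quadratic-variation cutoff, and the identification of the constant with $-\log\det_\zeta\Delta$ is outsourced to Naud's Selberg-trace-formula identity instead of being recomputed from the Mellin transform as you do. Your route is more general (any closed smooth Riemannian surface) at the cost of using the Minakshisundaram--Pleijel expansion in place of the explicit length-spectrum expression for the heat trace.
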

Although it is less general, as it only applies to compact hyperbolic surfaces, the asymptotic formula in \ref{item:compact_expansion} of Theorem \ref{thm:det} 
has the advantage of depending on only the killing rate $\kappa$, as it does not require cutting off loops with a small quadratic variation. 

An inspection of the proof of Lemma \ref{thm:each} shows that the total mass (with killing) of homotopically essential loops with quadratic variation smaller than $4\varpi$ is equal to 
\[ \int_0^{\varpi} \mathrm{Tr}E_{X,\kappa}(t)\,\frac{\mathrm d t}{t} \]
where $E_{X,\kappa}(t)$ is the renormalized heat operator as in Definition \ref{definition:trace}. 
Using Proposition~\ref{proposition:trace} and the fact that $S_X(t)$ decays exponentially as $t\to 0^+$, we conclude that this mass is of order $O(\varpi)$. 
By comparing \ref{item:compact_expansion} of Theorem \ref{thm:det} and 
Proposition \ref{ang park asymptotic}, we deduce the following asymptotic expansion for homotopically trivial loops with a quadratic variation greater than $4\varpi$:
\[\begin{split}
    \mu_X(\text{homotopically trivial loops with} & \text{ quadratic variation} \geq 4\varpi)\\
    & =\frac{\mathrm{Area}(X)}{4\pi}\left(\frac{1}{\varpi}+\frac{\log \varpi}{3}\right)+C+\epsilon(\varpi),
\end{split}\]
where $C\coloneqq-\mathrm{Area}(X)E+\left(\frac{\mathrm{Area}(X)}{12\pi} - 2\right)\gamma_{\mathrm{EM}}$ 
is a constant 
and $\epsilon(\varpi)\to 0$ as $\varpi\to 0$.

\subsubsection{The infinite-area case}\label{section:infinite}

When $X$ has an infinite area, the spectrum of the Laplacian is no longer discrete. In this case, the renormalization of the determinant of the Laplacian cannot be obtained directly from the Selberg trace formula. An alternative approach to renormalizing the determinant of the Laplacian,  using the 0-trace of the resolvent, was introduced by D. Borthwick, C. Judge, and P. A. Perry \cite{borth_judge_perry}. We briefly recall these renormalization methods here and develop the connections with the Brownian loop measure.

We first recall that, for a geometrically finite hyperbolic surface $X$ of infinite area, the resolvent of $\Delta_X$ on $X$ is defined for $\mathrm{Re}(s)>\frac12$, $s\notin [\frac12,1]$ as 
\[ R_X\coloneqq (\Delta_X + s(s-1))^{-1} : L^2(X)\to L^2(X). \]
It has been shown that the resolvent $R_X(s)$, when considered as a map from $C_0^{\infty}(X)$ to $C^{\infty}(X)$,
admits a meromorphic continuation to $\C$; see \cite[Chapter 6]{Borth} and references therein.
The poles of this meromorphic continuation of the resolvent are called the \textit{resonances} of $X$, and we use $\mathcal R_X$ to denote the set of resonances, counting multiplicities (see \cite[\S 8.1]{Borth} for the definition of multiplicities). 
The renormalized determinant of the Laplacian $\Delta_X$ is defined, according to Borthwick, Judge, and Perry \cite[\S 2]{borth_judge_perry}, through the differential equation
\[ \left( \frac{1}{2s-1}\frac{\mathrm d}{\mathrm d s} \right)^2\log{\det}_0(\Delta_X+s(s-1)) = \ztr(R_X(s)^2). \]
Here, ``$\ztr$'' denotes the 0-trace as in \cite[\S 10.1]{Borth}. We remark that $\det_0(\Delta_X+s(s-1))$ is defined up to two free parameters (by integrating the differential equation) and the choice of a boundary defining function of $X$ (by defining the 0-trace).

This renormalized determinant of the Laplacian happens to be well connected with the ``spectrum'' of $X$ (through resonances, to be more precise) and the length spectrum of $X$, as recalled in the proposition below. We first define a Weierstrass product
\[ P_X(s)\coloneqq s^{m(0)}\prod_{ \tau_j\in \mathcal R_X\setminus \{0\}  } \left( 1-\frac{s}{\tau_j} \right) e^{\frac{s}{\tau_j} + \frac{s^2}{2\tau_j^2} } = s^{m(0)}\exp\left( -\sum_{\tau_j\in \mathcal R_X\setminus \{0\}} \sum_{k\geq 3} \frac{1}{k}\frac{s^k}{\tau_j^k} \right) \]
where $m(0)$ is the resonance multiplicity of $0$. The interest in defining $P_X$ is that it converges uniformly on compact sets and thus defines an entire function for $s\in \C$. Moreover, the zero set of $P_X$ is exactly the set of resonances $\mathcal R_X$, counting multiplicities. We refer to \cite[\S 9.4]{Borth} and the references therein for details.

\begin{proposition}[{\cite{Borth}}, Theorems 10.1 and 10.14]\label{proposition:zeta_factor}
    Suppose $X$ is a non-elementary geometrically finite hyperbolic surface of infinite area. Then the Selberg zeta function $Z_X(s)$ admits the following factorization
    \begin{equation*}
        Z_X(s) = P_X(s)  e^{q(s)} G_{\infty}(s)^{-\chi(X)} \Gamma\left(s - \tfrac{1}{2}\right)^{n_c} ,
    \end{equation*}
    as well as
    \[ Z_X(s) = {\det}_0(\Delta_X+s(s-1)) e^{q_1(s)} G_{\infty}(s)^{-\chi(X)} \Gamma\left(s - \tfrac{1}{2}\right)^{n_c}(s-\tfrac12)^{\frac{n_c}{2}} \]
where $q$, $q_1$ are polynomials of degree at most $2$ and $G_{\infty}$, $\chi(X)$, $n_c$ are as in Theorem~\ref{thm:det}.
\end{proposition}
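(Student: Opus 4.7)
The plan is to derive both factorizations from Hadamard's factorization theorem applied to $Z_X(s)$, combined with the differential equation defining $\det_0(\Delta_X+s(s-1))$; in full detail the argument occupies \cite[Chapters 9--10]{Borth}. For the first identity, I would proceed in three steps. First, establish that $Z_X(s)\,\Gamma(s-\tfrac12)^{-n_c}$ extends to an entire function of order at most $2$, using Guillopé's meromorphic continuation \cite{Guillopé1990} together with polynomial growth estimates on $Z_X(s)$ outside a neighbourhood of its zero set, obtained from the resolvent parametrix construction in the infinite-area setting. Second, identify the divisor: the zeros of $Z_X(s)\,\Gamma(s-\tfrac12)^{-n_c}$ coming from resonances of $\Delta_X$ match (with multiplicities, including the multiplicity $m(0)$ at $s=0$) the zeros of $P_X(s)$, while the remaining ``trivial'' zeros are precisely accounted for by $G_\infty(s)^{-\chi(X)}$, which encodes the identity and hyperbolic contributions to the trace formula. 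Third, apply Hadamard's theorem to the nowhere-vanishing entire function
\[ s\longmapsto \frac{Z_X(s)}{P_X(s)\,G_\infty(s)^{-\chi(X)}\,\Gamma(s-\tfrac12)^{n_c}} \]
of order at most $2$ to conclude that it equals $e^{q(s)}$ for some polynomial $q$ with $\deg q\le 2$.

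For the second identity, I would begin from the defining differential equation
\[ \left(\frac{1}{2s-1}\frac{\d}{\d s}\right)^{\!2} \log{\det}_0(\Delta_X+s(s-1)) = \ztr(R_X(s)^2), \]
and compute $\ztr(R_X(s)^2)$ via a $0$-regularized resolvent trace formula. The right-hand side decomposes into a resonance contribution, a hyperbolic/topological contribution, and a parabolic/cusp contribution, which after two integrations in $s$ correspond respectively to $\log P_X(s)$, $-\chi(X)\log G_\infty(s)$, and $n_c\log\Gamma(s-\tfrac12)+\tfrac{n_c}{2}\log(s-\tfrac12)$, modulo a polynomial of degree $\le 2$ arising from the two integration constants. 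Exponentiating yields a factorization of $\det_0(\Delta_X+s(s-1))$ which, combined with the first identity, gives the second factorization with polynomial $q_1$.

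The main obstacle lies in the careful bookkeeping of multiplicities and polynomial corrections. The delicate points are the precise matching of resonance multiplicities with orders of zeros of $Z_X(s)\,\Gamma(s-\tfrac12)^{-n_c}$, and the explicit computation of the $0$-trace of the squared resolvent, which demands both the $0$-regularization framework of \cite[\S 10]{Borth} and a detailed analysis of the scattering determinant at the hyperbolic cusps. One must also check that the polynomial corrections produced by the trace formula are of degree at most $2$, as this is what ensures Hadamard's theorem delivers an exponential factor of the form $e^{q(s)}$ with $\deg q\le 2$ rather than something higher order; this in turn rests on the order-$2$ bound for $Z_X(s)\,\Gamma(s-\tfrac12)^{-n_c}$ established in the first step.
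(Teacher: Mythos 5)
The paper does not actually prove this proposition: it is imported verbatim from Borthwick's book (Theorems 10.1 and 10.14), so there is no internal proof to compare against. Your outline --- Hadamard factorization of the order-$2$ entire function $Z_X(s)\,\Gamma(s-\tfrac12)^{-n_c}$ with divisor matched to $P_X(s)\,G_\infty(s)^{-\chi(X)}$ for the first identity, and two integrations of $\ztr(R_X(s)^2)$ against the defining differential equation of ${\det}_0(\Delta_X+s(s-1))$ for the second --- is a faithful summary of the strategy carried out in that reference, though as written it remains a roadmap that defers the substantive work (the order bound, the multiplicity bookkeeping at resonances and at the cusp points $s=\tfrac12-k$, and the explicit resolvent $0$-trace computation) to \cite{Borth}.
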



Theorem \ref{thm:zeta} induces a probabilistic interpretation of Proposition \ref{proposition:zeta_factor} by identifying $Z_X(\frac12+\sqrt{\frac14+\kappa})$, where $\kappa>- \frac 14$, with the mass of homotopically essential loops under the Brownian loop measure. Identities in Proposition \ref{proposition:zeta_factor} thus link the Brownian loop measure to the resonances of the surface $X$, as well as to the regularization of the Laplacian in the infinite-area case. 

\begin{proof}[Proof of Theorem \ref{thm:det}]
    The theorem is a straightforward consequence of previous propositions.
    Item~\ref{item:normlized_trace} of Theorem \ref{thm:det} is a reformulation of Proposition \ref{link-trace-blm}.
    Item~\ref{item:compact_expansion} is proved in \S \ref{compact case of asymptotic expansion}, 
    and item~\ref{item:noncompact_expansion} is a combination of the results of Proposition \ref{proposition:zeta_factor} and Theorem~\ref{thm:zeta}.
\end{proof}

\bibliographystyle{alpha}
\bibliography{loop.bib}

\end{document}